\normalfont\fontsize{12}{15}\bfseries}{\thesection}{1em.}{}
\newtheorem{proposition}{Proposition}[section]
\newtheorem{lemma}{Lemma}[section]
\newtheorem{definition}{Definition}[section]
\newtheorem{theorem}{Theorem}[section]
\newtheorem{remark}{Remark}[section]
\newtheorem{example}{Example}[section]
\newtheorem{fact}{Fact}[section]
\let\oldbibliography\thebibliography
\renewcommand{\thebibliography}[1]{%
  \oldbibliography{#1}%
  \setlength{\itemsep}{-2pt}%
}
\newcommand{\tdot}[3]{\draw [fill=black,color=#3] (#1,#2) circle [radius=0.25];}
\newcommand{\beq}{\begin{equation}}
\newcommand{\eeq}{\end{equation}}
\newcommand{\beqn}{\begin{eqnarray}}
\newcommand{\eeqn}{\end{eqnarray}}
\newcommand{\luk}{Łukasiewicz}
\newcommand{\PF}[1]{\mathrm{PF}_{#1}}
\newcommand{\incPF}[1]{\mathrm{PF}^{\mathrm{inc}}_{#1}}
\newcommand{\PPF}[1]{\mathrm{PrimePF}_{#1}}
\newcommand{\incPPF}[1]{\mathrm{PrimePF}^{\mathrm{inc}}_{#1}}
\newcommand{\UPF}[1]{\mathrm{UPF}_{#1}}
\newcommand{\incUPF}[1]{\mathrm{UPF}^{\mathrm{inc}}_{#1}}
\newcommand{\Luk}[1]{\mathrm{Luk}_{#1}}
\newcommand{\PLuk}[1]{\mathrm{PrimeLuk}_{#1}}
\newcommand{\disp}[1]{\mathrm{disp}\left(#1\right)}
\newcommand{\area}[1]{\mathrm{Area}\left(#1\right)}
\newcommand{\height}[1]{\mathrm{Height}\left(#1\right)}
\newcommand{\PFtoLuk}{\Psi_{\mathrm{PF} \rightarrow \mathrm{Luk}}}
\newcommand{\LuktoPF}{\Psi_{\mathrm{Luk} \rightarrow \mathrm{PF}}}
\newcommand{\OO}[1]{\mathcal{O}\left( #1 \right)}
\newcommand{\Cat}[1]{\mathrm{Cat}_{#1}}
\newcommand{\Motz}[1]{\mathrm{Motz}_{#1}}
\definecolor{mygreen}{rgb}{0.1,0.8,0.1}
\definecolor{mygray}{rgb}{0.7,0.7,0.7}
\begin{document}

\baselineskip=0.20in

\makebox[\textwidth]{%
\hglue-15pt
\begin{minipage}{0.6cm}	
\vskip9pt
\end{minipage} \vspace{-\parskip}
\begin{minipage}[t]{6cm}
\footnotesize{ {\bf Discrete Mathematics Letters} \\ \underline{www.dmlett.com}}
\end{minipage}
\hfill
\begin{minipage}[t]{6.5cm}
\normalsize {\it Discrete Math. Lett.}  {\bf 14} (2024) 77--84
\end{minipage}}
\vskip36pt

\noindent
{\large \bf Parking functions and \luk\ paths}\\

\noindent
Thomas Selig$^{1,}\footnote{Corresponding author (Thomas.Selig@xjtlu.edu.cn).}$, Haoyue Zhu$^{1}$\\

\noindent
\footnotesize $^1${\it Department of Computing, School of Advanced Technology, Xi'an Jiaotong-Liverpool University, Suzhou, China}

\noindent
 (\footnotesize Received: 25 March 2024. Received in revised form: 29 October 2024. Accepted: 4 November 2024. Published online: 6 November 2024.)\\

\setcounter{page}{1} \thispagestyle{empty}

\baselineskip=0.20in

\normalsize

 \begin{abstract}
 \noindent
 We present a bijection between two well-known objects in the ubiquitous Catalan family: non-decreasing parking functions and \luk\ paths. This bijection maps the maximum displacement of a parking function to the height of the corresponding \luk\ path, and the total displacement to the area of the path. We also study this bijection restricted to two specific families of parking functions: unit-interval parking functions and prime parking functions.
 \\[2mm]
 {\bf Keywords:} parking functions; \luk\ paths; Catalan numbers; displacement statistic; bijection.\\[2mm]
 {\bf 2020 Mathematics Subject Classification:} 05A19 (Primary), 05A05, 05A10 (Secondary).
 \end{abstract}

\baselineskip=0.20in

\section{Introduction}\label{sec:intro}

The Catalan numbers, defined by $\Cat{n} := \frac{1}{n+1} \binom{2n}{n}$, are ubiquitous in combinatorics. The corresponding entry, Sequence A000108 in the OEIS~\cite{OEIS}, states: ``This is probably the longest entry in the OEIS, and rightly so''.  Stanley's book, \emph{Catalan Numbers}~\cite{StanCat}, offers hundreds of combinatorial interpretations of these numbers. Let us simply list some of the most famous: Dyck paths, plane trees, non-crossing partitions, permutations avoiding any single pattern of length $3$, and so on. 

In this paper, we present a bijection between two objects of the Catalan family: non-decreasing parking functions, and \luk\ paths (see Equation~\eqref{eq:PF_extendluk_path}). Surprisingly, while this bijection appears implicitly in some works (see e.g.~\cite[Section~1.4]{PFposet}), and can be obtained by composing various well-known bijections such as those in Stanley's book~\cite{StanCat}, it does not appear to have been explicitly stated in the literature. In particular, the preservation of various statistics (see below) does not seem to have been previously noted, and is one of our main contributions. 

Our paper is organised as follows. In the remainder of this section, we introduce parking functions and \luk\ paths, and recall some of their important properties. Section~\ref{sec:main} establishes the main result of this paper: a bijection between non-decreasing parking functions and \luk\ paths. We study the effects of the bijection on the \emph{displacement} of parking functions, which measures how far away cars end up from their preferred spots. More precisely, in Theorem~\ref{thm:bij_pf_luk} we will see that the total displacement of a parking function maps to the area of the corresponding \luk\ path, and the maximum displacement to the height. We also present a simple algorithmic procedure to get the inverse bijection from \luk\ paths to non-decreasing parking functions (Algorithm~\ref{algo:LuktoPF} and Theorem~\ref{thm:algo_LuktoPF}). Finally, in Section~\ref{sec:spec} we study various specialisations of this bijection to families of parking functions and \luk\ paths with added restrictions.

\subsection{Parking functions}\label{subsec:pf}

Throughout this paper, $n$ denotes a positive integer, and we let $[n] := \{1, \ldots, n\}$. Consider a one-directional car park consisting of $n$ spots labelled $1$ to $n$, and $n$ cars also labelled $1$ to $n$. A \emph{parking preference} is a vector $p=(p_1,\ldots,p_n) \in [n]^n$, with $p_i$ denoting the preferred parking spot of car $i$ for each $i \in [n]$. The cars enter the car park sequentially in order $1$ to $n$. If spot $p_i$ is empty when car $i$ enters, then car $i$ parks in spot $p_i$. Otherwise, if spot $p_i$ has already been occupied by some previous car $j<i$, car $i$ cannot park in spot $p_i$. In that case, the car drives on and parks in the first unoccupied spot $k>p_i$. If no such spot exists, car $i$ exits the car park and fails to park. We say that $p$ is a  \emph{parking function} if all cars are able to park through this process (see Example~\ref{exa:PF_outcome}). We denote by $\PF{n}$ the set of parking functions with $n$ cars/spots. 

\begin{definition}\label{def:spotSeq}
Given a parking function $p = (p_1, \ldots, p_n)$, the \emph{outcome} of $p$ is the sequence $\OO{p}=(o_1,\ldots,o_n) $, where for each car $i\in [n]$, $o_i$ is the spot where car $i$ ends up parking.
\end{definition}

\begin{definition}\label{def:displacement}
Given a parking function $p=(p_1,\ldots,p_n) \in \PF{n}$, with outcome $\OO{p} = (o_1, \ldots, o_n)$, the \emph{displacement} $d_i$ of car $i$ describes the distance between the initial preference $p_i$ of car $i$ and the actual spot where car $i$ ends up, i.e.\ $d_i=o_i-p_i$. The \emph{displacement vector} of $p$ is then $\disp{p} := (d_1,d_2,\ldots,d_n)$, and the \emph{total displacement} of $p$ is $\vert \disp{p} \vert := \sum_{i\in[n]}d_i$.
\end{definition}

\begin{example}\label{exa:PF_outcome}
Consider the parking preference $p = (2, 1, 4, 4, 1)$. We first describe the parking process for $p$, which is illustrated in Figure~\ref{fig:exa_OPF}. Initially car $1$ parks in spot $2$, followed by car $2$ parking in spot $1$, and car $3$ in spot $4$ (none of these spots are occupied when the cars arrive). When car $4$ arrives, it wants to park in spot $4$. However, spot $4$ is occupied by car $3$, so car $4$ drives on to find the first available spot and park in it, which is spot $5$. Finally, car $5$ wants to park in spot $1$, but spot $1$ has been occupied by car $2$, causing car $5$ to drive on: spot $2$ is also occupied by car $1$, so car $5$ ends up parking in spot $3$ (which is the first available spot at this point). Finally, all cars are able to park, so $p$ is a parking function. Moreover, we get the outcome $\OO{p}=(2,1,4,5,3)$, and displacement vector $\disp{p} = (0,0,0,1,2)$, with total displacement $\vert \disp{p} \vert = 3$.

\begin{figure}[ht]
 \centering
  \begin{tikzpicture}[scale=0.2]
    
    \node at (-3,1.2) {cars};
    \node at (-3,-4.5) {spots};       
    
    \foreach \x in {1,...,5}
	  \node at (-1+4*\x,-4.5) {$\x$};
    \draw [thick, color=blue] (1,-2)--(1,-3)--(5,-3)--(5,-2);
    \draw [thick, color=blue] (5,-2)--(5,-3)--(9,-3)--(9,-2);
    \draw [thick, color=blue] (9,-2)--(9,-3)--(13,-3)--(13,-2);
    \draw [thick, color=blue] (13,-2)--(13,-3)--(17,-3)--(17,-2);
    \draw [thick, color=blue] (17,-2)--(17,-3)--(21,-3)--(21,-2);
    \node [draw, circle, color=red, scale=0.8pt] (1) at (7,1.2) {$1$};
    \node at (7, -1.8) {$\downarrow$};
    \node at (25, 0) {$\Longrightarrow$};

    \begin{scope}[shift={(28,0)}]
    \foreach \x in {1,...,5}
	  \node at (-1+4*\x,-4.5) {$\x$};
    \draw [thick, color=blue] (1,-2)--(1,-3)--(5,-3)--(5,-2);
    \draw [thick, color=blue] (5,-2)--(5,-3)--(9,-3)--(9,-2);
    \draw [thick, color=blue] (9,-2)--(9,-3)--(13,-3)--(13,-2);
    \draw [thick, color=blue] (13,-2)--(13,-3)--(17,-3)--(17,-2);
    \draw [thick, color=blue] (17,-2)--(17,-3)--(21,-3)--(21,-2);
    \node [draw, circle, scale=0.8pt] (1) at (7,-1) {$1$};
    \node [draw, circle, color=red, scale=0.8pt] (1) at (3,1.2) {$2$};
    \node at (3, -1.8) {$\downarrow$};
    \node at (25, 0) {$\Longrightarrow$};
    \end{scope}

    \begin{scope}[shift={(56,0)}]
    \foreach \x in {1,...,5}
	  \node at (-1+4*\x,-4.5) {$\x$};
    \draw [thick, color=blue] (1,-2)--(1,-3)--(5,-3)--(5,-2);
    \draw [thick, color=blue] (5,-2)--(5,-3)--(9,-3)--(9,-2);
    \draw [thick, color=blue] (9,-2)--(9,-3)--(13,-3)--(13,-2);
    \draw [thick, color=blue] (13,-2)--(13,-3)--(17,-3)--(17,-2);
    \draw [thick, color=blue] (17,-2)--(17,-3)--(21,-3)--(21,-2);
    \node [draw, circle, scale=0.8pt] (1) at (7,-1) {$1$};
    \node [draw, circle, scale=0.8pt] (2) at (3,-1) {$2$};
    \node [draw, circle, color=red, scale=0.8pt] (3) at (15,1.2) {$3$};
    \node at (15, -1.8) {$\downarrow$};
    \end{scope}

    \begin{scope}[shift={(0,-12)}]
    \node at (-3, 0) {$\Longrightarrow$};
    \foreach \x in {1,...,5}
	  \node at (-1+4*\x,-4.5) {$\x$};
    \draw [thick, color=blue] (1,-2)--(1,-3)--(5,-3)--(5,-2);
    \draw [thick, color=blue] (5,-2)--(5,-3)--(9,-3)--(9,-2);
    \draw [thick, color=blue] (9,-2)--(9,-3)--(13,-3)--(13,-2);
    \draw [thick, color=blue] (13,-2)--(13,-3)--(17,-3)--(17,-2);
    \draw [thick, color=blue] (17,-2)--(17,-3)--(21,-3)--(21,-2);
    \node [draw, circle, scale=0.8pt] (1) at (7,-1) {$1$};
    \node [draw, circle, scale=0.8pt] (2) at (3,-1) {$2$};
    \node [draw, circle, scale=0.8pt] (3) at (15,-1) {$3$};
    \node [draw, circle, color=red, scale=0.8pt] (4) at (15,3) {$4$};
    \node at (18.2, 3) {$\curvearrowright$};
    \node at (25, 0) {$\Longrightarrow$};
    \end{scope}

    \begin{scope}[shift={(28,-12)}]
    \node at (-3, 0) {$\Longrightarrow$};
    \foreach \x in {1,...,5}
	  \node at (-1+4*\x,-4.5) {$\x$};
    \draw [thick, color=blue] (1,-2)--(1,-3)--(5,-3)--(5,-2);
    \draw [thick, color=blue] (5,-2)--(5,-3)--(9,-3)--(9,-2);
    \draw [thick, color=blue] (9,-2)--(9,-3)--(13,-3)--(13,-2);
    \draw [thick, color=blue] (13,-2)--(13,-3)--(17,-3)--(17,-2);
    \draw [thick, color=blue] (17,-2)--(17,-3)--(21,-3)--(21,-2);
    \node [draw, circle, scale=0.8pt] (1) at (7,-1) {$1$};
    \node [draw, circle, scale=0.8pt] (2) at (3,-1) {$2$};
    \node [draw, circle, scale=0.8pt] (3) at (15,-1) {$3$};
    \node [draw, circle, scale=0.8pt] (4) at (19,-1) {$4$};
    \node [draw, circle, color=red, scale=0.8pt] (5) at (3,3) {$5$};
    \node at (7, 3) {$\longrightarrow$};
    \node at (10.5, 2) {$\searrow$};
    \node at (25, 0) {$\Longrightarrow$};
    \end{scope}

    \begin{scope}[shift={(56,-12)}]
    \foreach \x in {1,...,5}
	  \node at (-1+4*\x,-4.5) {$\x$};
    \draw [thick, color=blue] (1,-2)--(1,-3)--(5,-3)--(5,-2);
    \draw [thick, color=blue] (5,-2)--(5,-3)--(9,-3)--(9,-2);
    \draw [thick, color=blue] (9,-2)--(9,-3)--(13,-3)--(13,-2);
    \draw [thick, color=blue] (13,-2)--(13,-3)--(17,-3)--(17,-2);
    \draw [thick, color=blue] (17,-2)--(17,-3)--(21,-3)--(21,-2);
    \node [draw, circle, scale=0.8pt] (1) at (7,-1) {$1$};
    \node [draw, circle, scale=0.8pt] (2) at (3,-1) {$2$};
    \node [draw, circle, scale=0.8pt] (3) at (15,-1) {$3$};
    \node [draw, circle, scale=0.8pt] (4) at (19,-1) {$4$};
    \node [draw, circle, scale=0.8pt] (5) at (11,-1) {$5$};
    \end{scope}
    
  \end{tikzpicture}
  \caption{The parking process for $p=(2, 1, 4, 4, 1) \in \PF{5}$.\label{fig:exa_OPF}}
\end{figure}
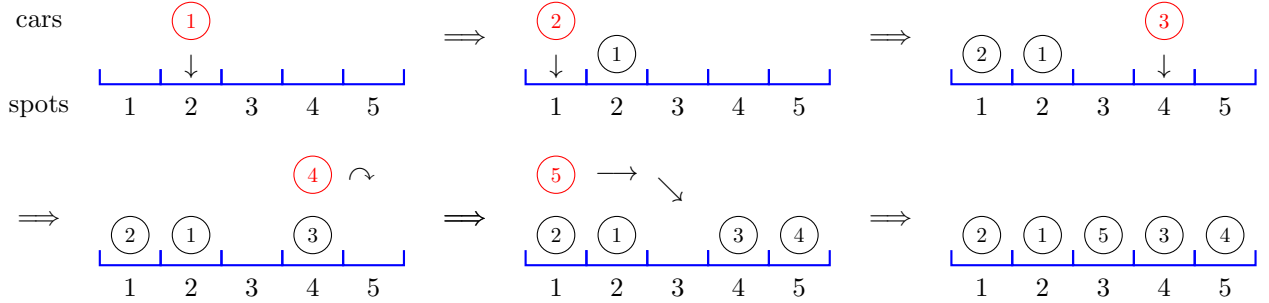

\end{example}

Parking functions were originally introduced by Konheim and Weiss~\cite{KonWeiss} to study a hashing technique known as linear probing. Since then, they have been a popular research topic in Mathematics and Computer Science, with rich combinatorial connections to fields such as hyperplane arrangements~\cite{StanHyp} or statistical physics~\cite{CR}. We refer the interested reader to the excellent survey by Yan~\cite{YanSurvey}. The following result gives two classical characterisations of parking functions (see~\cite{YanSurvey}).

\begin{proposition}\label{pro:pf_char}
Let $p = (p_1, \ldots, p_n) \in [n]^n$ be a parking preference. Define $p^{\mathrm{inc}} = (p^{\mathrm{inc}}_1, \ldots, p^{\mathrm{inc}}_n)$ be its non-decreasing re-arrangement. Then the following are equivalent.
\begin{enumerate}[noitemsep, topsep=2pt]
\item We have $p \in \PF{n}$.
\item For all $i \in [n]$, we have $p^{\mathrm{inc}}_i \leq i$.
\item For all $i \in [n]$, we have $\left\vert \{j \in [n]; \, p_j \leq i \} \right\vert \geq i$.
\end{enumerate}
\end{proposition}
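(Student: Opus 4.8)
The plan is to establish the cycle of implications $(1) \Rightarrow (3) \Rightarrow (2) \Rightarrow (1)$, with the equivalence $(2) \Leftrightarrow (3)$ being essentially a reformulation through sorting.

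For $(1) \Rightarrow (3)$, I would fix $i \in [n]$ and exploit the one structural fact underlying everything here: in the parking process a car only ever moves \emph{forward} from its preferred spot, so a car $j$ with $p_j > i$ can occupy only a spot $> i$. Hence each of the spots $1, \ldots, i$ is filled exclusively by cars with preference at most $i$; since $p \in \PF{n}$ forces every spot — in particular these $i$ — to be occupied, and each car occupies exactly one spot, there must be at least $i$ cars with $p_j \le i$. For $(2) \Leftrightarrow (3)$, observe that for each fixed $i$ the inequality $p^{\mathrm{inc}}_i \le i$ says the $i$-th smallest entry of $p$ is at most $i$, which holds exactly when at least $i$ entries are at most $i$; quantifying over $i \in [n]$ gives the equivalence.

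For $(3) \Rightarrow (1)$ I would argue by contraposition. If $p \notin \PF{n}$, then at the end of the process some spot is empty; let $s$ be the \emph{largest} such spot. The key remark is that a spot that is empty at the end was empty throughout the process, since parked cars never move. Thus whenever a car $j$ with $p_j \le s$ arrives, spot $s$ is available, so this car parks successfully, in the first free spot $\ge p_j$; that spot is at most $s$, and it is not $s$ itself (no car ever parks in spot $s$), hence it lies in $\{1, \ldots, s-1\}$. So all cars with preference at most $s$ park within these $s-1$ spots, giving $\left\vert \{ j \in [n]; \, p_j \le s \} \right\vert \le s-1$, which violates $(3)$ at $i = s$. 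Chaining this with $(2) \Rightarrow (3)$ closes the cycle.

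I expect $(3) \Rightarrow (1)$ to be the only step requiring genuine care: it hinges on upgrading ``empty at the end'' to ``empty throughout'', which is precisely what prevents small-preference cars from overshooting spot $s$; once this is noticed, the counting is immediate. One could alternatively invoke the fact that membership in $\PF{n}$ is invariant under permuting the preference vector and then analyse the non-decreasing representative $p^{\mathrm{inc}}$ directly (using $p^{\mathrm{inc}}_1 = 1$ and an easy induction showing that car $k$ lands in spot $k$), but the largest-empty-spot argument above has the advantage of not requiring that permutation-invariance lemma as an input.
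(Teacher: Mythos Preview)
Your argument is correct. The implication $(1)\Rightarrow(3)$ is clean, the equivalence $(2)\Leftrightarrow(3)$ is the standard order-statistic observation, and your contrapositive for $(3)\Rightarrow(1)$ via the largest empty spot is sound; the key point that an empty-at-the-end spot was empty throughout is exactly what forces every car with $p_j\le s$ to park strictly before $s$.

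As for comparison: the paper does not actually prove this proposition. It is stated as a classical characterisation with a reference to Yan's survey~\cite{YanSurvey}, and no argument is given. So there is nothing in the paper to compare your proof against; what you have written is a self-contained proof of a result the authors take as background. Your largest-empty-spot argument is one of the standard proofs of this fact, and your remark about the alternative via permutation-invariance of $\PF{n}$ is also a well-known route.
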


In this paper, we will be primarily interested in \emph{non-decreasing} parking functions. These are parking functions which are in weakly increasing order, i.e.\ $p_i \leq p_{i+1}$ for all $i \in [n-1]$. We denote $\incPF{n}$ the set of non-decreasing parking functions of length $n$. There is a classical bijection between non-decreasing parking functions and Dyck paths, which we recall briefly here (see also~\cite[Page~54]{YanSurvey}). 

For our purposes, a Dyck path will be a lattice path from $(0,0)$ to $(n,n)$ for some $n \geq 0$ with steps $E = (1,0)$ and $N = (0,1)$, which never goes above the diagonal $y = x$ (see Figure~\ref{fig:PF_to_Dyck}). Note that a Dyck path $w$ is uniquely determined by the weakly increasing sequence $0 = h_1 \leq \cdots \leq h_n \leq n-1$ of heights ($y$-coordinates) of its $E$ steps. Formally, we define $w = w(h_1, \ldots, h_n) := E N^{h_2-h_1} E N^{h_3 - h_2} \cdots E N^{n-h_n}$, where the notation $N^k$ indicates the step $N$ repeated $k$ times.

It is then straightforward to see that the map $p = (p_1, \ldots, p_n) \mapsto w(p_1-1, \ldots, p_n-1)$ is a bijection from the set $\incPF{n}$ of non-decreasing parking functions of length $n$ to the set of Dyck paths with $2n$ steps. In particular, we have $\vert \incPF{n} \vert = \Cat{n}$, the $n$-th Catalan number. Moreover, for $p \in \incPF{n}$, the total displacement $\vert \disp{p} \vert$ of $p$ is equal to the \emph{area} $\area{w}$ of the corresponding Dyck path $w$, defined as the number of complete lattice squares between the path $w$ and the line $y=x$.

\begin{example}\label{ex:PF_to_Dyck}
Consider the non-decreasing parking function $p = (1,1,2,4,4) \in \incPF{5}$. The corresponding height sequence is $h = (0,0,1,3,3)$, yielding the Dyck path $w = EENENNEENN$ as in Figure~\ref{fig:PF_to_Dyck} below. Here we label the $i$-th $E$ step with the value $p_i = h_i + 1$ for each $i \in [n]$. We can check that $\disp{p} = (0, 1, 1, 0, 1)$ (see also Fact~\ref{fact:inc_pf_inc_outcome}), which gives $\vert \disp{p} \vert = 3 = \area{w}$ (given by the shaded lattice squares).
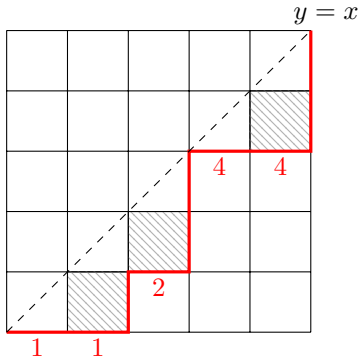
\begin{figure}[ht]
 \centering
  \begin{tikzpicture}[scale=0.4]
  \centering
   \draw [step=2] (0,0) grid (10,10);
    \draw [thin, pattern=north west lines, pattern color=mygray] (2,0)--(4,0)--(4,2)--(2,2)--cycle;
    \draw [thin, pattern=north west lines, pattern color=mygray] (4,2)--(6,2)--(6,4)--(4,4)--cycle;
    \draw [thin, pattern=north west lines, pattern color=mygray] (8,6)--(10,6)--(10,8)--(8,8)--cycle;
    \draw [very thick, color=red]  (0,0)--(4,0)--(4,2)--(6,2)--(6,6)--(10,6)--(10,10);
    \node [red, below, yshift=1pt] at (1,0) {$1$};
    \node [red, below, yshift=1pt] at (3,0) {$1$};
    \node [red, below, yshift=1pt] at (5,2) {$2$};
    \node [red, below, yshift=1pt] at (7,6) {$4$};
    \node [red, below, yshift=1pt] at (9,6) {$4$};
    \draw [dashed] (0,0)--(10,10);
    \node at (10.5,10.5) {$y = x$};
  \end{tikzpicture}
  \caption{The Dyck path corresponding to the non-decreasing parking function $p =  (1, 1, 2, 4, 4) \in \incPF{5}$.\label{fig:PF_to_Dyck}}
\end{figure}
\end{example}

\begin{remark}\label{rmk:area_Naples}
Because of this correspondence to the area statistic of Dyck paths, the total displacement statistic is also sometimes referred to as the \emph{area} of parking functions. This statistic has been studied in previous work, including by Kreweras~\cite{Kre} who showed that it is equi-distributed with the inversion statistic on labelled plane trees. More recently, Colmenarejo \emph{et al.}~\cite{HarrisKNaple} studied the area statistic on a generalisation of parking functions called \emph{$k$-Naples} parking functions where, if a car's preferred spot is occupied, it is first allowed to reverse up to $k$ spots before driving on.
\end{remark}

We end this section with the simple following fact, which will prove useful in Section~\ref{sec:main}.

\begin{fact}\label{fact:inc_pf_inc_outcome}
Let $p \in \incPF{n}$ be a non-decreasing parking function. Then we have $\OO{p} = (1, 2, \ldots, n)$.
\end{fact}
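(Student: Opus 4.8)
The plan is to argue by induction on the car index $i \in [n]$, showing that car $i$ ends up parking in spot $i$. The key input is the characterisation in Proposition~\ref{pro:pf_char}(2): since $p$ is already non-decreasing, its non-decreasing rearrangement $p^{\mathrm{inc}}$ coincides with $p$ itself, so that $p_i \leq i$ for every $i \in [n]$.

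For the base case $i = 1$, the bound $p_1 \leq 1$ forces $p_1 = 1$, and since the car park is empty when car $1$ arrives, car $1$ parks in spot $1$, i.e.\ $o_1 = 1$.

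For the inductive step, suppose that for some $i \geq 2$, each car $j < i$ has parked in spot $j$; in particular, at the moment car $i$ enters, the occupied spots are exactly $1, \ldots, i-1$. Car $i$ prefers spot $p_i \leq i$. If $p_i = i$, then spot $i$ is free and car $i$ parks there directly. If $p_i < i$, then spot $p_i$ is occupied, so car $i$ drives on; since all of the spots $p_i, p_i + 1, \ldots, i-1$ are occupied while spot $i$ is free, the first unoccupied spot after $p_i$ is spot $i$, and car $i$ parks there. In either case $o_i = i$, which completes the induction and hence the proof.

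There is no genuine obstacle here; the only step that warrants a moment's care is the observation that for a non-decreasing parking preference the rearrangement $p^{\mathrm{inc}}$ equals $p$ itself, so that Proposition~\ref{pro:pf_char}(2) applies directly to the entries of $p$ in their given order. (One could alternatively phrase the argument without induction by noting that, since all preferences satisfy $p_j \leq i$ for $j \leq i$ and these are weakly increasing, at least $i$ cars compete for spots $1, \ldots, i$, forcing spots $1, \ldots, i$ to be filled by cars $1, \ldots, i$ in order; but the inductive formulation is cleanest.)
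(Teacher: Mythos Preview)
Your proof is correct, but it follows a different route from the paper's. The paper first establishes a general observation valid for \emph{any} parking function: if cars $i<j$ satisfy $p_i \leq p_j$, then their outcomes satisfy $o_i < o_j$ (because once car $i$ parks, the interval $[p_i,o_i]$ is fully occupied, so car $j$ must land beyond $o_i$). Applying this to a non-decreasing $p$ shows that $\OO{p}$ is strictly increasing, and the only strictly increasing sequence of length $n$ in $[n]$ is the identity. By contrast, you invoke Proposition~\ref{pro:pf_char}(2) to get $p_i \leq i$ and then argue by a direct induction on $i$ that car $i$ parks in spot $i$. Your approach is more explicit and self-contained for this particular fact; the paper's has the advantage of isolating a monotonicity lemma that holds for arbitrary (not necessarily non-decreasing) parking functions, which could in principle be reused, though the paper does not do so explicitly.
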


\begin{proof}
First, observe that for any parking function $p = (p_1, \ldots, p_n)$, if two cars $i < j$ have preferences $p_i \leq p_j$, and final parking spots $o_i$ and $o_j$, then we have $o_i < o_j$. Indeed, by definition, $o_i$ is the first spot $k \geq p_i$ which is available when car $i$ enters the car park. In particular, once $i$ has parked, all spots between $p_i$ and $o_i$ (both included) are occupied. Since car $j$ enters after car $i$, and parks in the first available spot $k' \geq p_j \geq p_i$, this spot must be after $o_i$, as desired. Now assume that $p$ is non-decreasing. From the previous observation we see that the sequence $\OO{p}$ must be increasing, and $(1, 2, \ldots, n)$ is the only increasing sequence of $[n]$ of length $n$.
\end{proof}

\subsection{\luk\ paths}\label{subsec:luk}

\begin{definition}\label{def:luk_word}
A \luk\ \emph{word} of length $n$ is a sequence $\ell = (\ell_1, \ldots, \ell_n)$ of integers $\ell_i \geq -1$ such that:
\begin{itemize}[topsep=2pt, noitemsep]
\item For any $k \in [n]$, we have $\sum\limits_{i=1}^k \ell_i \geq 0$.
\item We have $\sum\limits_{i=1}^n \ell_i = 0$.
\end{itemize}
\end{definition}

\luk\ words are usually represented as certain lattice paths, by associating to each $\ell_i$ the step $(1, \ell_i)$. In this work, we choose a slightly different representation by instead taking steps of the form $s^k := (k+1,k)$. We will refer to $k$ as the \emph{size} of the step $s^k$. In this setting, a \luk\ path is a lattice path with $n$ steps in the set $S = \{s^k\}_{k \geq -1}$, which starts at $(0,0)$, ends at $(n,0)$ and never goes below the $x$-axis.  Figure~\ref{fig:exa_extendluk} shows an example of a \luk\ path with $n=12$ steps. There is an obvious bijection between \luk\ words of length $n$ and \luk\ paths with $n$ steps by mapping each element $\ell_i$ in the word to the step $s^{\ell_i}$ in the path. With slight abuse of notation, we identify these two sets, denoting them $\Luk{n}$, i.e.\ we write $\ell \in \Luk{n}$ to refer to a \luk\ word or path, depending on context.

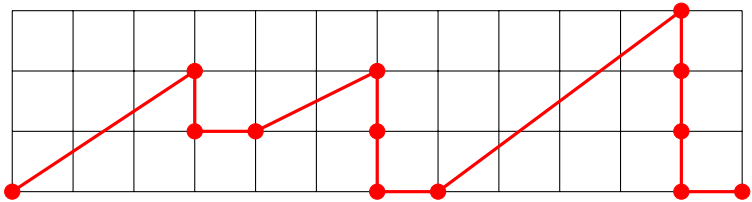
\begin{figure}[ht]
 \centering
  \begin{tikzpicture}[scale=0.4]
  \centering
   \draw [step=2] (0,-10) grid (24,-16);

	\tdot{0}{-16}{red}
    \tdot{6}{-12}{red}
    \tdot{6}{-14}{red}
    \tdot{8}{-14}{red}
    \tdot{12}{-12}{red}
    \tdot{12}{-14}{red}
    \tdot{12}{-16}{red} 
    \tdot{14}{-16}{red}
    \tdot{22}{-10}{red}
    \tdot{22}{-12}{red}
    \tdot{22}{-14}{red}
    \tdot{22}{-16}{red}
    \tdot{24}{-16}{red}
    
    \draw [very thick, color=red]  (0,-16)--(6,-12);
    \draw [very thick, color=red] (6,-12)--(6,-14)--(8,-14);
    \draw [very thick, color=red] (8,-14)--(12,-12);
    \draw [very thick, color=red]  (12,-12)--(12,-14)--(12,-16);
	\draw [very thick, color=red] (12,-16)--(14,-16)--(22,-10);
	\draw [very thick, color=red]  (22,-10)--(22,-12)--(22,-14)--(22,-16);
	\draw [very thick, color=red]  (22,-16)--(24,-16);

  \end{tikzpicture}
  \caption{Example of a \luk\ path corresponding to the word $\ell = (2, -1, 0, 1, -1, -1, 0, 3, -1, -1, -1, 0)$.\label{fig:exa_extendluk}}
\end{figure}

\luk\ paths were named after the Polish mathematician Jan \luk. While maybe not as ubiquitous as their lattice path cousins, Dyck paths and Motzkin paths, they have nonetheless been a rich research topic in combinatorics and discrete probability. Perhaps the most famous use of \luk\ words is as a bijective encoding of rooted plane trees, see e.g.~\cite[Chapter~1, Section~5]{Flaj}. For this, we map a rooted plane tree $T$ with $n+1$ nodes to the word $\ell = (\ell_1, \ldots, \ell_n)$, where $\ell_i$ is one less than the number of children of the $i$-th node visited in the depth-first search (DFS) of $T$ (the last node visited in a DFS is always a leaf, so we omit it in the corresponding word). The bijection implies in particular that $\vert \Luk{n} \vert = \Cat{n}$. Figure~\ref{fig:tree_luk} shows an example of this encoding, with the nodes on the plane tree labelled according to their DFS index. 

\begin{figure}[ht]
 \centering
  \begin{tikzpicture}[scale=0.4]
   \begin{scope}[shift={(-15,0)}]
   \node [draw, circle] (1) at (-8, -17) {$1$};
   \node [draw, circle] (2) at (-11, -14) {$2$};
   \node [draw, circle] (3) at (-8, -14) {$3$};
   \node [draw, circle] (5) at (-4, -14) {$5$};
   \node [draw, circle] (4) at (-8, -11) {$4$};
   \node [draw, circle] (6) at (-5.2, -11) {$6$};
   \node [draw, circle] (7) at (-2.8, -11) {$7$};
   \draw [very thick, blue] (2)--(1)--(3)--(4);
   \draw [very thick, blue] (7)--(5);
   \draw [very thick, blue] (1)--(5)--(6);
   \end{scope}
   
   \begin{scope}[shift={(-1,0)}]
   \node at (-14.5, -14) {\LARGE$\longrightarrow$};
   \node[scale=1.25] at (-8, -14) {$(2, -1, 0, -1, 1, -1)$};
   \node at (-1.5, -14) {\LARGE$\longrightarrow$};
   \end{scope}
   
   \draw [step=2] (0,-12) grid (12,-16);
	\tdot{0}{-16}{red}
    \tdot{6}{-12}{red}
    \tdot{6}{-14}{red}
    \tdot{8}{-14}{red}
    \tdot{8}{-16}{red}
    \tdot{12}{-14}{red}
    \tdot{12}{-16}{red}    
    \draw [very thick, color=red]  (0,-16)--(6,-12)--(6,-14)--(8,-14)--(8,-16)--(12,-14)--(12,-16);
  \end{tikzpicture}
  \caption{A plane tree with nodes labelled according to the DFS (left) together with its Lukasiewicz word encoding (middle) and the corresponding representation as a Lukasiewicz path (right).\label{fig:tree_luk}}
\end{figure}
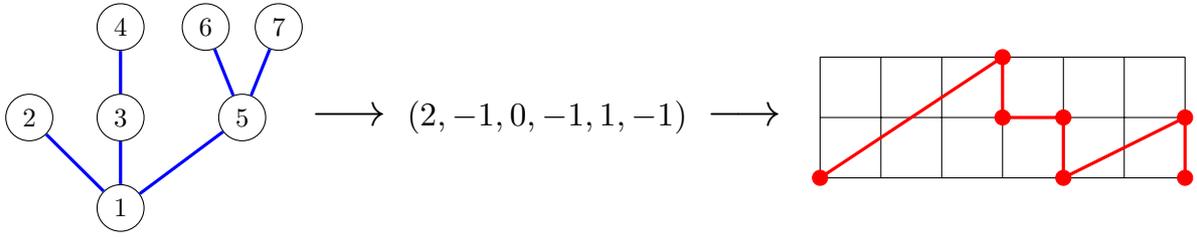

One statistic of interest for a \luk\ path $\ell$ is its \emph{height}, denoted $\height{\ell}$, defined to be the largest $y$-coordinate reached by the path. For example, the \luk\ path in Figure~\ref{fig:exa_extendluk} has height $3$. Another statistic of interest is the \emph{area} of $\ell$, defined to be the area between the path and the $x$-axis. We denote this $\area{\ell}$. This can be computed through the following observation. If $\ell$ takes a step $s^k = (k+1,k)$ starting at height $h$ above the $x$-axis (see Figure~\ref{fig:exa_area}), then the area $\mathcal{A}(k,h)$ ``under'' this step is simply the area of a trapezium, given by 
\beq\label{eq:area_step}
\mathcal{A}(k,h) = (k+1) \cdot \dfrac{h + (h+k)}{2} = \dfrac{(k+1)(2h+k)}{2}.
\eeq
To obtain the area of the entire path, we simply take the sums of all these areas. For example, for  the \luk\ path $\ell$ in Figure~\ref{fig:exa_extendluk}, we get $\area{\ell} = 3 + 0 + 1 + 2 \cdot 3/2 + 0 + 0 + 0 + 4 \cdot 3/2 + 0 + 0 + 0 + 0 = 13$.

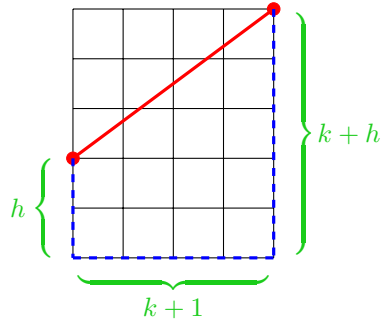
\begin{figure}[ht]
 \centering
  \begin{tikzpicture}[scale=0.33]
  \centering
   \draw [step=2] (0,-6) grid (8,-16);

	\tdot{0}{-12}{red}
    \tdot{8}{-6}{red}
    
    \draw [very thick, color=red]  (0,-12)--(8,-6);
    \draw [very thick, dashed, color=blue] (0,-16)--(8,-16);
    \draw [very thick, dashed, color=blue] (0,-12)--(0,-16); 
	\draw [very thick, dashed, color=blue] (8,-6)--(8,-16);    
    
	\node[rotate = 90, mygreen] at (9.2, -11) {$\underbrace{\hspace{3.2cm}}$};
    \node[mygreen] at (11,-11) {$k+h$};
    \node[rotate = -90, mygreen] at (-1.2, -14) {$\underbrace{\hspace{1.3cm}}$};
    \node[mygreen] at (-2.2,-14) {$h$};
    \node[rotate = 0, mygreen] at (4, -17) {$\underbrace{\hspace{2.5cm}}$};
    \node[mygreen] at (4,-18) {$k+1$};
  \end{tikzpicture}
  \caption{A step $s^k = (k+1, k)$ starting at height $h$. The area under this step is given by Equation~\eqref{eq:area_step}.\label{fig:exa_area}}
\end{figure}

\section{The main result}\label{sec:main}

We are now equipped to define the map from parking functions to \luk\ paths. Given a parking function $p = (p_1, \ldots, p_n) \in \PF{n}$, we define a sequence $\PFtoLuk(p) = \ell = (\ell_1, \ldots, \ell_n)$ by:
\beq\label{eq:PF_extendluk_path}
\forall i \in [n], \ \ell_i := \left\vert \{j \in [n]; \, p_j = i \} \right\vert - 1.
\eeq
In other words, $\ell_i$ is one less than the number of cars whose preferred spot is $i$ in the parking function $p$. In particular, we have $\ell_i \geq -1$. The following is then a straightforward consequence of Proposition~\ref{pro:pf_char} (Characterisation~(3)) and of Definition~\ref{def:luk_word}.

\begin{proposition}\label{pro:pf_to_luk_well-def}
For any parking function $p \in \PF{n}$, we have $\PFtoLuk(p) \in \Luk{n}$.
\end{proposition}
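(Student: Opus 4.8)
The statement is that $\ell := \PFtoLuk(p)$ satisfies the two bullet conditions in Definition~\ref{def:luk_word} (the condition $\ell_i \geq -1$ is already noted in the text, as $\ell_i$ is a cardinality minus one). So the plan is simply to verify the prefix-sum inequality and the total-sum identity, and the only real ingredient is to rewrite the partial sums of $\ell$ in a form to which Proposition~\ref{pro:pf_char} applies.

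First I would record the key bookkeeping identity: for each $k \in [n]$, the fibres $\{j \in [n];\, p_j = i\}$ for $i = 1, \ldots, k$ are pairwise disjoint, and their union is exactly $\{j \in [n];\, p_j \leq k\}$. Hence
\[
\sum_{i=1}^{k} \bigl\vert \{j \in [n];\, p_j = i\} \bigr\vert = \bigl\vert \{j \in [n];\, p_j \leq k\} \bigr\vert,
\]
and therefore $\sum_{i=1}^{k} \ell_i = \bigl\vert \{j \in [n];\, p_j \leq k\} \bigr\vert - k$.

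With this in hand, the first bullet of Definition~\ref{def:luk_word} follows directly from Characterisation~(3) of Proposition~\ref{pro:pf_char}: since $p \in \PF{n}$, we have $\bigl\vert \{j \in [n];\, p_j \leq k\} \bigr\vert \geq k$ for all $k \in [n]$, so $\sum_{i=1}^{k} \ell_i \geq 0$. Taking $k = n$ gives the second bullet, because every car has exactly one preferred spot, so $\bigl\vert \{j \in [n];\, p_j \leq n\} \bigr\vert = n$ and thus $\sum_{i=1}^{n} \ell_i = n - n = 0$.

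I do not anticipate any genuine obstacle here: the proof is a one-line consequence of the partition identity together with the already-quoted characterisation of parking functions. The only point worth stating carefully is the disjoint-union observation, since that is what converts the ``$p_j = i$'' counts appearing in Equation~\eqref{eq:PF_extendluk_path} into the ``$p_j \leq i$'' counts appearing in Proposition~\ref{pro:pf_char}.
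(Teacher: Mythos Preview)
Your proof is correct and follows exactly the route the paper indicates: the paper itself gives no detailed argument, merely stating that the result is a straightforward consequence of Proposition~\ref{pro:pf_char}, Characterisation~(3), and Definition~\ref{def:luk_word}. Your write-up simply spells out the partition identity that makes this consequence explicit, which is precisely the intended reasoning.
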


We are now equipped to state the main result of this paper.

\begin{theorem}\label{thm:bij_pf_luk}
The map $\PFtoLuk: \incPF{n} \rightarrow \Luk{n}$ is a bijection. Moreover, for any $p \in \incPF{n}$, we have $\vert \disp{p} \vert = \area{\PFtoLuk(p)}$, and $\max \left( \disp{p} \right) = \height{\PFtoLuk(p)}$.
\end{theorem}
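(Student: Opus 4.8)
The plan is to establish the three claims in turn, with one shared preliminary. The preliminary is Fact~\ref{fact:inc_pf_inc_outcome}: for $p=(p_1,\dots,p_n)\in\incPF{n}$ we have $\OO{p}=(1,\dots,n)$, so $d_i=i-p_i$ for all $i\in[n]$; moreover, since $p$ is weakly increasing with $p_i\le i$ (Characterisation~(2) of Proposition~\ref{pro:pf_char}), the set $\{j:p_j\le i\}$ is an initial segment $\{1,\dots,m_i\}$ of $[n]$, where $m_i:=|\{j:p_j\le i\}|\ge i$ (Characterisation~(3)). For the bijection I would write down the inverse explicitly: given $\ell=(\ell_1,\dots,\ell_n)\in\Luk{n}$, let $p$ be the unique weakly increasing word containing exactly $\ell_i+1$ copies of $i$ for each $i\in[n]$, which makes sense because $\sum_i(\ell_i+1)=n$ by Definition~\ref{def:luk_word}. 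Then for every $i$, $|\{j:p_j\le i\}|=\sum_{k\le i}(\ell_k+1)=i+\sum_{k\le i}\ell_k\ge i$, so $p\in\PF{n}$ by Characterisation~(3), and clearly $\PFtoLuk(p)=\ell$; conversely $\PFtoLuk(p)$ records the multiset of values of $p$, which determines a weakly increasing $p$ uniquely, so $\PFtoLuk$ is injective on $\incPF{n}$ (and this, given $|\incPF{n}|=|\Luk{n}|=\Cat{n}$, already suffices).

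For the two statistics, write $h_i:=\sum_{k=1}^{i-1}\ell_k$ for the starting height of the $i$-th step of $\ell=\PFtoLuk(p)$, and put $b_i:=m_i-i$ for $0\le i\le n$. Equation~\eqref{eq:PF_extendluk_path} shows $h_{i+1}=b_i$, with $b_0=b_n=0$ and all $b_i\ge 0$, and since every step $s^k$ with $k\ge-1$ is monotone in the $y$-direction we get $\height{\ell}=\max_{0\le i\le n}b_i$. For the area, I would substitute $\ell_i+1=(h_{i+1}-h_i)+1$ into the trapezium formula~\eqref{eq:area_step} and sum over $i$: the squared terms telescope to $\tfrac12(h_{n+1}^2-h_1^2)=0$, leaving the clean identity $\area{\ell}=\sum_{i=1}^n h_i=\sum_{i=0}^{n-1}b_i$. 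On the parking side, $d_i=i-p_i=|\{k:p_i\le k<i\}|$, so summing over $i$ and swapping the order of summation gives $|\disp{p}|=\sum_{k=1}^{n}|\{i:p_i\le k<i\}|=\sum_{k=1}^{n}(m_k-k)=\sum_{k=1}^{n}b_k$, where I use the initial-segment property of $\{i:p_i\le k\}$ to identify the inner count. Since $b_0=b_n=0$, the two sums agree, so $\area{\ell}=|\disp{p}|$.

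It remains to match the maxima: $\max(\disp{p})=\max_i(i-p_i)$ and $\height{\ell}=\max_i b_i$. For ``$\le$'', if $b_i>0$ then the car $J:=m_i$ satisfies $p_J\le i$ (it lies in the segment) and $J>i$, so $d_J=J-p_J\ge m_i-i=b_i$; hence every $b_i$ is bounded above by some $d_J$. For ``$\ge$'', if $j^\ast$ attains $\max_i d_i$ and $d_{j^\ast}>0$, set $i:=p_{j^\ast}<j^\ast$; then $j^\ast\le m_i$ because $p_{j^\ast}\le i$, so $b_i\ge j^\ast-i=d_{j^\ast}$ (the case $\max_i d_i=0$ being immediate since $b_0=0$). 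I expect the genuine content to be exactly this pair of inequalities together with keeping the index shifts straight among the three sequences $(h_i)$, $(b_i)$, $(d_i)$; the observation that $\{j:p_j\le i\}$ is an initial segment — a direct consequence of monotonicity — is what makes the maximum-displacement comparison go through, while the area identity reduces to the telescoping computation $\area{\ell}=\sum_i h_i$ and routine bookkeeping.
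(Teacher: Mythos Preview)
Your proof is correct. The bijection part is essentially identical to the paper's (your explicit inverse is the content of Algorithm~\ref{algo:LuktoPF} and Theorem~\ref{thm:algo_LuktoPF}). For the two statistics, however, you take a genuinely different route. The paper argues \emph{locally}: it groups cars by preferred spot $j+1$, observes that these are cars $\gamma_j+1,\dots,\gamma_j+k+1$ (with $k=\ell_{j+1}$) occupying spots $j+h+1,\dots,j+h+k+1$ where $h=h(\ell;j)$, so their displacements are exactly $(h,h+1,\dots,h+k)$; summing gives $(k+1)(2h+k)/2$, matching the trapezium area under step $j+1$, and taking the maximum gives $h+k=h(\ell;j+1)$, matching the height after that step. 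Both identities then follow by summing or maximising over $j$. Your route is \emph{global}: for the area you derive the identity $\area{\ell}=\sum_i h_i$ by telescoping and match it to $\sum_k b_k$ via a double-counting of $|\disp{p}|$; for the height you prove two separate inequalities using the initial-segment property of $\{j:p_j\le i\}$. The paper's local argument yields a finer step-by-step correspondence (displacements of cars preferring spot $j+1$ $\leftrightarrow$ area and peak of step $j+1$) essentially for free, while your telescoping produces the general lattice-path identity $\area{\ell}=\sum_i h_i$ as a pleasant byproduct and makes the area computation independent of any car-by-car analysis. One cosmetic slip: your ``$\le$'' paragraph actually establishes $\height{\ell}\le\max(\disp{p})$ and your ``$\ge$'' paragraph the reverse, so the two labels are swapped relative to the order in which you stated the quantities.
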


\begin{example}\label{exa:p_to_extendluk}
Consider the non-decreasing parking function $p = (1, 1, 1, 3, 4, 4, 7, 8, 8, 8, 8, 12) \in \incPF{12}$. For each car $i$, the displacement $d_i$ is given by $d_i = o_i - p_i = i - p_i$ by Fact~\ref{fact:inc_pf_inc_outcome}, yielding $\disp{p} = (0, 1, 2, 1, 1, 2, 0, 0, 1, 2, 3, 0)$. In particular, we get total displacement $\vert \disp{p} \vert = 13$, and maximum displacement $\max \left( \disp{p} \right) = 3$.  The corresponding \luk\ word is given by $\ell = \PFtoLuk(p) =  (2,-1,0,1,-1,-1,0,3,-1,-1,-1,0)$, whose lattice path is exactly the \luk\ path illustrated in Figure~\ref{fig:exa_extendluk}, which has $\area{\ell} = 13$ and $\height{\ell} = 3$, as desired.
\end{example}

\begin{remark}\label{rmk:luk_dyck_bijs}
The bijection from non-decreasing parking functions to Dyck paths from Section~\ref{subsec:pf} can be similarly defined if we consider a Dyck path to be a lattice path from $(0,0)$ to $(2n,0)$ with steps $U = (1,1)$ and $D = (1,-1)$ which never goes below the $x$-axis. In this setting, the Dyck path $w$ corresponding to $p \in \incPF{n}$ is defined by $w = U^{q_1} D U^{q_2} D \cdots U^{q_n} D$, where $q_i := \left\vert \{j \in [n]; \, p_j = i \} \right\vert$ for each $i \in [n]$. Compared to this construction, our map has the disadvantage of needing to subtract one from each $q_i$ (see Equation~\eqref{eq:PF_extendluk_path}). However, the benefits of our map are that we directly get the sequence of steps in the corresponding path (without needing to insert the $D$ steps as in the Dyck path), as well as more direct mappings from the total and maximum displacement statistics to the area and height of the path.
\end{remark}

To show that $\PFtoLuk$ is a bijection, we exhibit its inverse in Algorithm~\ref{algo:LuktoPF}. We will use square brackets instead of standard parentheses to delimit sequences, using the latter instead to indicate precedence order (as on Line~\ref{line:new_p}). Here $\varepsilon := [\,]$ denotes the empty sequence, and for two sequences $a = [a_1, \ldots, a_x]$, $b = [b_1, \ldots, b_y]$, and an integer $m$, we write $a + b := [a_1, \ldots, a_x, b_1, \ldots, b_y]$ for the concatenation of $a$ and $b$, and $a * m := a + \cdots + a$ (repeated $m$ times), with the convention $a * 0 = \varepsilon$. As usual, multiplication takes precedence over addition.

\begin{algorithm}
\caption{From \luk\ words to parking functions}\label{algo:LuktoPF}
  \begin{algorithmic}[1]
      \Require $\ell = [\ell_1, \ldots, \ell_n] \in \Luk{n}$
      \State \textbf{Initialise:} $p \gets \varepsilon$
      \For {$i = 1$ to $n$} 
        \State $p \gets p + [i] * (\ell_i + 1)$ \label{line:new_p} \Comment{Append $i$ to $p$ $(\ell_i + 1)$ times}
      \EndFor \\
      \Return $p := \LuktoPF(\ell)$
  \end{algorithmic}
\end{algorithm}

\begin{theorem}\label{thm:algo_LuktoPF}
For any \luk\ word $\ell \in \Luk{n}$, Algorithm~\ref{algo:LuktoPF} outputs a non-decreasing parking function $p = \LuktoPF(\ell) \in \incPF{n}$. Moreover, the maps $\PFtoLuk: \incPF{n} \rightarrow \Luk{n}$ and $\LuktoPF: \Luk{n} \rightarrow \incPF{n}$ are inverses of each other.
\end{theorem}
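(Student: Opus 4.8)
The plan is to verify three things in sequence: first, that $\LuktoPF(\ell)$ is a genuine element of $\incPF{n}$ for every $\ell \in \Luk{n}$ (the first assertion); second, that $\PFtoLuk \circ \LuktoPF = \mathrm{id}_{\Luk{n}}$; and third, that $\LuktoPF \circ \PFtoLuk = \mathrm{id}_{\incPF{n}}$. Combined with Proposition~\ref{pro:pf_to_luk_well-def}, the last two identities immediately yield that $\PFtoLuk$ and $\LuktoPF$ are mutually inverse bijections (so this also re-proves the bijectivity claimed in Theorem~\ref{thm:bij_pf_luk}).

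For the first assertion, I would unpack Algorithm~\ref{algo:LuktoPF}: its output $p$ consists of $\ell_1+1$ copies of $1$, then $\ell_2+1$ copies of $2$, and so on. Each $\ell_i+1 \geq 0$ since $\ell_i \geq -1$, so this is a legitimate weakly increasing sequence, and it has $\sum_{i=1}^n (\ell_i+1) = n + \sum_{i=1}^n \ell_i = n$ entries by the second defining property of a \luk\ word (Definition~\ref{def:luk_word}). It is non-decreasing by construction and its entries lie in $[n]$. To see it is a parking function, I apply Characterisation~(3) of Proposition~\ref{pro:pf_char}: for each $i \in [n]$, $\left\vert \{j : p_j \leq i\} \right\vert = \sum_{k=1}^i (\ell_k + 1) = i + \sum_{k=1}^i \ell_k \geq i$, where the inequality is exactly the first (partial-sum) property of a \luk\ word. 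Hence $p \in \incPF{n}$.

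For the second identity, fix $\ell \in \Luk{n}$ and set $p = \LuktoPF(\ell)$. By construction the value $i$ is appended to $p$ exactly $\ell_i+1$ times (at iteration $i$ of Line~\ref{line:new_p}) and never otherwise, so $\left\vert \{j : p_j = i\} \right\vert = \ell_i + 1$, and therefore $\PFtoLuk(p)_i = \left\vert \{j : p_j = i\} \right\vert - 1 = \ell_i$ for all $i$. For the third identity, fix $p \in \incPF{n}$ and set $\ell = \PFtoLuk(p)$, so that $\ell_i + 1$ is precisely the multiplicity of $i$ in $p$. Then $\LuktoPF(\ell)$ outputs the weakly increasing sequence containing $\ell_i+1$ copies of each $i$ — the same multiset of entries as $p$ — and since a weakly increasing sequence is determined by its multiset of entries, this output equals $p$.

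I do not anticipate a genuine obstacle: the argument is essentially bookkeeping, with the two defining properties of \luk\ words supplying exactly the length and parking-function conditions in the first step. If anything requires a moment of care, it is the observation in the third step that the weakly increasing rearrangement of a given multiset is unique (which is why running $\LuktoPF$ on $\PFtoLuk(p)$ recovers $p$ rather than merely some rearrangement of it), and the bookkeeping that $i$ contributes to $p$ only at iteration $i$ of the loop.
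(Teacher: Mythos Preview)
Your proposal is correct and follows essentially the same approach as the paper: you use the two defining properties of a \luk\ word to check that the output has length $n$ and satisfies Characterisation~(3) of Proposition~\ref{pro:pf_char}, exactly as the paper does. The only difference is that where the paper dismisses the mutual-inverse claim as ``immediate from their constructions'', you actually spell out the bookkeeping (multiplicity of $i$ in $p$ equals $\ell_i+1$, and a weakly increasing sequence is determined by its multiset of entries); this is a harmless elaboration rather than a different route.
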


\begin{remark}\label{rem:luk_to_p}
The non-decreasing parking function $p = (p_1, \ldots, p_n) = \LuktoPF(\ell)$ corresponding to a \luk\ path $\ell \in \Luk{n}$ can also be read ``graphically'' as follows. For each $i \in [n]$, $p_i$ is the index $j$ of the step $\ell_j$ of $\ell$ which crosses the region from $x=i-1$ to $x=i$. Figure~\ref{fig:exa_luk_to_pf} illustrates this construction for the \luk\ path $\ell = [2,-1,0,1,-1,-1,0,3,-1,-1,-1,0]$. The parking function $p = \LuktoPF(\ell)$ is obtained by reading the blue indices below the $x$-axis from left-to-right, yielding $p = (1,1,1,3,4,4,7,8,8,8,8,12)$. It is straightforward to check that Algorithm~\ref{algo:LuktoPF} gives the same parking function.

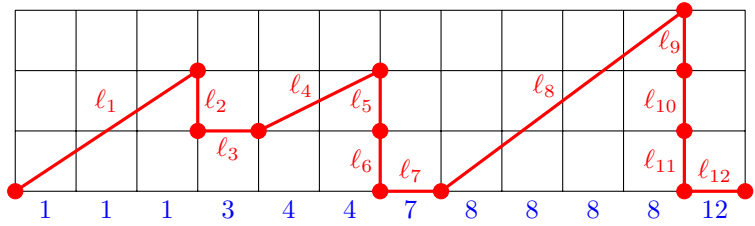
\begin{figure}[ht]
 \centering
  \begin{tikzpicture}[scale=0.4]
  \centering
   \draw [step=2] (0,-10) grid (24,-16);

	\tdot{0}{-16}{red}
    \tdot{6}{-12}{red}
    \tdot{6}{-14}{red}
    \tdot{8}{-14}{red}
    \tdot{12}{-12}{red}
    \tdot{12}{-14}{red}
    \tdot{12}{-16}{red} 
    \tdot{14}{-16}{red}
    \tdot{22}{-10}{red}
    \tdot{22}{-12}{red}
    \tdot{22}{-14}{red}
    \tdot{22}{-16}{red}
    \tdot{24}{-16}{red}
    
    \draw [very thick, color=red]  (0,-16)--(6,-12);
    \draw [very thick, color=red] (6,-12)--(6,-14)--(8,-14);
    \draw [very thick, color=red] (8,-14)--(12,-12);
    \draw [very thick, color=red]  (12,-12)--(12,-14)--(12,-16);
	\draw [very thick, color=red] (12,-16)--(14,-16)--(22,-10);
	\draw [very thick, color=red]  (22,-10)--(22,-12)--(22,-14)--(22,-16);
	\draw [very thick, color=red]  (22,-16)--(24,-16);
	
	\node [red] at (3,-13) {$\ell_1$};
	\node [right, xshift=-1pt, red] at (6,-13) {$\ell_2$};
	\node [below, yshift=1pt, red] at (7,-14) {$\ell_3$};
	\node [red] at (9.4,-12.5) {$\ell_4$};
	\node [left, xshift=1pt, red] at (12,-13) {$\ell_5$};
	\node [left, xshift=1pt, red] at (12,-15) {$\ell_6$};
	\node [above, red] at (13,-16) {$\ell_7$};
	\node [red] at (17.4,-12.5) {$\ell_8$};
	\node [left, xshift=2.5pt, yshift=-0.5pt, red] at (22,-11) {$\ell_9$};
	\node [left, xshift=1pt, red] at (22,-13) {$\ell_{10}$};
	\node [left, xshift=1pt, red] at (22,-15) {$\ell_{11}$};
	\node [above, red] at (23,-16) {$\ell_{12}$};
	
	\node [below, blue] at (1,-16) {$1$};
	\node [below, blue] at (3,-16) {$1$};
	\node [below, blue] at (5,-16) {$1$};
	\node [below, blue] at (7,-16) {$3$};
	\node [below, blue] at (9,-16) {$4$};
	\node [below, blue] at (11,-16) {$4$};
	\node [below, blue] at (13,-16) {$7$};
	\node [below, blue] at (15,-16) {$8$};
	\node [below, blue] at (17,-16) {$8$};
	\node [below, blue] at (19,-16) {$8$};
	\node [below, blue] at (21,-16) {$8$};
	\node [below, blue] at (23,-16) {$12$};

  \end{tikzpicture}
  \caption{Illustrating the construction from \luk\ paths to non-decreasing parking functions.\label{fig:exa_luk_to_pf}}
\end{figure}

\end{remark}

\begin{proof}[Proof of Theorem~\ref{thm:algo_LuktoPF}]
By construction, Algorithm~\ref{algo:LuktoPF} outputs a non-decreasing sequence $p$ of elements in $[n]$, whose length is $\sum\limits_{i=1}^n (\ell_i + 1) = 0 + n = n$, where we use the fact that the elements of a \luk\ word sum to $0$. Therefore $p$ is a parking preference. 
Now fix some $i \in [n]$. By construction, we have $\left\vert \{ j \in [n]; \, p_j \leq i \} \right\vert = \sum\limits_{j = 1}^i (\ell_j + 1) \geq 0 + i = i$ (again applying Definition~\ref{def:luk_word}). 
Therefore $p$ is a parking function by Proposition~\ref{pro:pf_char}. The fact that the maps $\PFtoLuk: \incPF{n} \rightarrow \Luk{n}$ and $\LuktoPF: \Luk{n} \rightarrow \incPF{n}$ are inverses of each other follows immediately from their constructions.
\end{proof}

We now turn to the proof of Theorem~\ref{thm:bij_pf_luk}. Theorem~\ref{thm:algo_LuktoPF} implies that the map $\PFtoLuk: \incPF{n} \rightarrow \Luk{n}$ is a bijection, so it remains to show the equalities relating to the displacement statistic. For a \luk\ path $\ell = (\ell_1, \ldots, \ell_n) \in \Luk{n}$ and an index $j \in [n]$, we denote by $h(\ell; j) := \sum\limits_{i=1}^j \ell_i$ the height of the path $\ell$ after $j$ steps.

\begin{lemma}\label{lem:height_excess_cars}
Let $p \in \incPF{n}$ be a non-decreasing parking function, and $\ell := \PFtoLuk(p)$ the corresponding \luk\ path. For any $j \in \{0, \ldots, n\}$, we have:
\beq\label{eq:height_excess_cars}
h(\ell; j) = \left\vert \{i \in [n]; \, p_i \leq j \} \right\vert - j.
\eeq
\end{lemma}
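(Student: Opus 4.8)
The plan is to compute $h(\ell; j)$ directly from the definition of $\PFtoLuk$ and telescope. By definition~\eqref{eq:PF_extendluk_path}, $\ell_i = \left\vert \{k \in [n]; \, p_k = i\} \right\vert - 1$, so summing over $i = 1, \ldots, j$ gives
\beq
h(\ell; j) = \sum_{i=1}^j \ell_i = \sum_{i=1}^j \left( \left\vert \{k \in [n]; \, p_k = i\} \right\vert - 1 \right) = \left\vert \{k \in [n]; \, p_k \leq j\} \right\vert - j,
\eeq
where the last equality holds because the sets $\{k; \, p_k = i\}$ for $i = 1, \ldots, j$ partition $\{k; \, p_k \leq j\}$, and there are exactly $j$ terms of $-1$. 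This is precisely~\eqref{eq:height_excess_cars}.

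I would handle the boundary case $j = 0$ separately (or note it is subsumed): the sum is empty, giving $h(\ell; 0) = 0 = \left\vert \emptyset \right\vert - 0$, consistent with the path starting at the origin. I should also remark that this computation does not actually use that $p$ is non-decreasing, only that $p \in \PF{n}$ — the non-decreasing hypothesis is stated because it is the ambient setting of the section and because Fact~\ref{fact:inc_pf_inc_outcome} will be invoked when this lemma is applied to deduce the displacement statements in Theorem~\ref{thm:bij_pf_luk}.

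Honestly, there is no real obstacle here: the lemma is a one-line telescoping identity unwinding the definitions of $h(\ell; j)$ and $\PFtoLuk$. The only thing to be careful about is the partition argument — making explicit that $\bigsqcup_{i=1}^j \{k \in [n]; \, p_k = i\} = \{k \in [n]; \, p_k \leq j\}$ so that the counts add correctly — and keeping track of the $-j$ coming from the $j$ copies of $-1$. The mild conceptual point worth flagging is that~\eqref{eq:height_excess_cars} is exactly Characterisation~(3) of Proposition~\ref{pro:pf_char} rephrased: the condition $h(\ell; j) \geq 0$ for all $j$ is the same as $\left\vert \{k; \, p_k \leq j\}\right\vert \geq j$, which is why $\PFtoLuk(p)$ lands in $\Luk{n}$ in the first place. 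So this lemma is really just making that correspondence quantitative, and the proof is immediate once the definitions are aligned.
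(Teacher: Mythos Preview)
Your proof is correct and follows exactly the same approach as the paper: the paper's proof simply states the definition of $\ell_i$ and says the result ``follows through summation'', which is precisely the telescoping computation you wrote out in full. Your added remarks on the $j=0$ case, the partition argument, and the observation that the non-decreasing hypothesis is not actually used are all correct and a bit more explicit than the paper's version, but there is no difference in method.
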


\begin{proof}
By definition, for any $i \in [n]$ we have $\ell_i = \left\vert \{k \in [n]; \, p_k = i \} \right\vert -1$. Equation~\eqref{eq:height_excess_cars} then follows through summation.
\end{proof}

One may think of the right-hand side of Equation~\eqref{eq:height_excess_cars} as measuring \emph{excess cars}: it counts the number of cars that wish to park in or before spot $j$ but will be unable to do so (i.e.\ end up parking in some spot $k > j$). Lemma~\ref{lem:height_excess_cars} then states that $\PFtoLuk$ maps the excess cars statistic to the height of the corresponding \luk\ path.

\begin{proof}[Proof of Theorem~\ref{thm:bij_pf_luk}]
Let $p \in \incPF{n}$ be a non-decreasing parking function, and $\ell := \PFtoLuk(p)$ the corresponding \luk\ path. Fix some $j \in \{0, \ldots, n-1\}$, and define 
$\gamma_j := \left\vert \{i \in [n]; \, p_i \leq j \} \right\vert $ to be the number of cars which prefer to park in one of the first $j$ spots. 
Since $p$ is non-decreasing, Lemma~\ref{lem:height_excess_cars} and Fact~\ref{fact:inc_pf_inc_outcome} then imply that cars $1$ to $\gamma_j$ occupy exactly the spots $1$ to $j + h$, where $h := h(\ell; j)$. 
Now let $k := \ell_{j+1}$, and consider the cars $\gamma_j + 1, \cdots, \gamma_j+k+1$. By construction, this is exactly the set of cars whose preferred spot is $j+1$. In the parking process for $p$, these cars occupy spots $j+h+1, \cdots, j+h+k+1$. This yields the (partial) displacement vector
\beq\label{eq:disp_partial}
\left( d_{\gamma_j+1}, \ldots, d_{\gamma_j+k+1} \right) = (h, \ldots, h+k).
\eeq
By summation, we get:
\begin{align*}
d_{\gamma_j+1} + \cdots + d_{\gamma_j+k+1} & = h + \cdots + (h+k) = (k+1)h + 1 + \cdots + k \\
 & = (k+1)h + \dfrac{k(k+1)}{2} = \dfrac{(k+1)(2h+k)}{2}, \\
\end{align*}
which is exactly the area $\mathcal{A}(k,h)$ given in Equation~\eqref{eq:area_step}. In words, the total displacement of cars preferring spot $(j+1)$ is equal to the area under the $(j+1)$-th step of the \luk\ path $\ell$, which immediately gives $\vert \disp{p} \vert = \area{\ell}$ by summing over all steps. Moreover, in Equation~\eqref{eq:disp_partial}, we may also take the maximum to get
$$ \max \left( d_{\gamma_j+1}, \ldots, d_{\gamma_j+k+1} \right) = h + k = h(\ell; j) + \ell_{j+1} = h(\ell; j+1). $$
In other words, for any $j \in \{0, \ldots, n-1\}$, the height $h(\ell; j+1)$ of the \luk\ path $\ell$ after $j+1$ steps is equal to the maximum displacement of cars $\gamma_j + 1, \cdots, \gamma_j+k+1$, which as noted above are exactly those that prefer spot $j+1$. Taking the maximum over all such $j$ immediately yields $\max \left( \disp{p} \right) = \height{\PFtoLuk(p)}$, as desired.
\end{proof}

\begin{remark}\label{rem:label_luk}
Theorem~\ref{thm:bij_pf_luk} essentially states that non-decreasing parking functions are uniquely defined by the numbers of cars preferring each spot in the car park. In other words, \luk\ paths encode parking functions up to permutation of their elements. In order to encode all parking functions, we need to also know which cars prefer a given spot. This can be done by labelling each step $s^k = (k+1, k)$ for $k \geq 0$ of the \luk\ path with a subset $S \subset [n]$ of size $k+1$ such that the label sets over all steps form a partition of the set $[n]$. For example, consider the \luk\ path $\ell$ from Figure~\ref{fig:exa_extendluk}. We may choose the following labelling: $\left(2^{\{2, 3, 10\}}, -1, 0^{\{5\}}, 1^{\{6,8\}}, -1, -1, 0^{\{9\}}, 3^{\{1,7,11,12\}}, -1, -1, -1, 0^{\{4\}} \right)$, with the exponent indicating the label set associated to each step. This encodes the parking function $p = (8, 1, 1, 12, 3, 4, 8, 4, 7, 1, 8, 8)$. 
\end{remark}

In the above example, we have $\OO{p} = (8, 1, 2, 12, 3, 4, 9, 5, 7, 6, 10, 11)$, yielding the displacement vector $\disp{p} = (0, 0, 1, 0, 0, 0, 1, 1, 0, 5, 2, 3)$, so the maximum displacement is $5$, while the path has height $3$. This means that in general, the map $\PFtoLuk$ does not map the maximum displacement of a parking function to the height of its corresponding path. On the other hand, the total displacement is still equal to $13$. We will see that this property holds true in general.

\begin{theorem}\label{thm:surj_PF_luk}
The map $\PFtoLuk : \PF{n} \rightarrow \Luk{n}$ is a surjection. Moreover, for any $p \in \PF{n}$, we have $\vert \disp{p} \vert = \area{\PFtoLuk(p)}$. Finally, for any \luk\ path $\ell$, the fibre set $\PFtoLuk^{-1}(\ell) := \{ p \in \PF{n}; \, \PFtoLuk(p) = \ell \}$ is obtained by taking all possible permutations of the non-decreasing parking function $\LuktoPF(\ell)$.
\end{theorem}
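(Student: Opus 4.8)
The plan is to reduce everything to the non-decreasing case already settled in Theorem~\ref{thm:bij_pf_luk}, exploiting the fact that both $\PFtoLuk(p)$ and $\vert \disp{p} \vert$ depend only on the \emph{multiset} of entries of $p$, not on their order. The first step is to record that $\PFtoLuk$ is invariant under permutation of coordinates: by Equation~\eqref{eq:PF_extendluk_path}, $\ell_i$ only counts how many entries of $p$ equal $i$, so $\PFtoLuk(p) = \PFtoLuk(p')$ whenever $p'$ is a rearrangement of $p$. In particular $\PFtoLuk(p) = \PFtoLuk(p^{\mathrm{inc}})$, where $p^{\mathrm{inc}}$ denotes the non-decreasing rearrangement, and since $p \in \PF{n}$ we have $p^{\mathrm{inc}} \in \incPF{n}$ by Proposition~\ref{pro:pf_char}.

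Next I would treat surjectivity together with the fibre description. Fix $\ell \in \Luk{n}$ and set $q := \LuktoPF(\ell) \in \incPF{n}$ (Theorem~\ref{thm:algo_LuktoPF}). If $p$ is any rearrangement of $q$, then its non-decreasing rearrangement is $q$, which is a parking function, so $p \in \PF{n}$ by Characterisation~(2) of Proposition~\ref{pro:pf_char}; and $\PFtoLuk(p) = \PFtoLuk(q) = \ell$ by permutation-invariance together with Theorem~\ref{thm:algo_LuktoPF}. This shows at once that $\PFtoLuk$ is surjective and that every rearrangement of $\LuktoPF(\ell)$ lies in the fibre $\PFtoLuk^{-1}(\ell)$. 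For the reverse inclusion, suppose $\PFtoLuk(p) = \ell$. Then $\PFtoLuk(p^{\mathrm{inc}}) = \ell$ as well, and since $\PFtoLuk$ restricted to $\incPF{n}$ is a bijection onto $\Luk{n}$ (Theorems~\ref{thm:bij_pf_luk} and~\ref{thm:algo_LuktoPF}), we get $p^{\mathrm{inc}} = \LuktoPF(\ell)$; hence $p$ is a rearrangement of $\LuktoPF(\ell)$. This pins the fibre down exactly.

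Finally, for the area identity: since every $p \in \PF{n}$ parks all $n$ cars, its outcome $\OO{p}$ is a permutation of $[n]$, so $\sum_{i \in [n]} o_i = \binom{n+1}{2}$ and therefore $\vert \disp{p} \vert = \sum_{i \in [n]} (o_i - p_i) = \binom{n+1}{2} - \sum_{i \in [n]} p_i$, which depends only on the multiset of entries of $p$. Hence $\vert \disp{p} \vert = \vert \disp{p^{\mathrm{inc}}} \vert$, and applying Theorem~\ref{thm:bij_pf_luk} to $p^{\mathrm{inc}} \in \incPF{n}$ gives $\vert \disp{p^{\mathrm{inc}}} \vert = \area{\PFtoLuk(p^{\mathrm{inc}})} = \area{\PFtoLuk(p)}$, the last equality again by permutation-invariance of $\PFtoLuk$.

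I do not expect any genuinely hard step here: the whole argument transfers the already-established non-decreasing case across the permutation action on parking preferences. The only points needing a little care are (i) verifying that $p^{\mathrm{inc}}$ is actually a member of $\incPF{n}$ before invoking Theorem~\ref{thm:bij_pf_luk} — which is exactly Proposition~\ref{pro:pf_char} — and (ii) noting that $\vert \disp{p} \vert$, a priori an ordered sum, is in fact a function of the unordered multiset of preferences, which is what makes the reduction legitimate.
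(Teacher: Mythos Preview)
Your proposal is correct and follows essentially the same route as the paper: reduce to the non-decreasing case via permutation-invariance of $\PFtoLuk$, and show $\vert \disp{p} \vert = \binom{n+1}{2} - \sum_i p_i$ is invariant under rearrangement since the outcome is a permutation of $[n]$. Your treatment of the fibre description is in fact more explicit than the paper's (which simply cites the bijection and the labelling remark), but the underlying idea is identical.
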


\begin{proof}
The surjectivity of $\PFtoLuk$ and description of its fibres follow from Theorem~\ref{thm:bij_pf_luk} and Remark~\ref{rem:label_luk}. We show that for any $p \in \PF{n}$, we have $\vert \disp{p} \vert = \area{\PFtoLuk(p)}$. In fact, since this formula holds for non-decreasing parking functions by Theorem~\ref{thm:bij_pf_luk}, it suffices to show that the total displacement $\vert \disp{p} \vert$ is invariant under permutation of parking preferences. 
But this follows essentially from the definition of the displacement, combined with the observation that if $o = \OO{p} = (o_1, \ldots, o_n)$ is the outcome of a parking function $p$, every spot in $[n]$ appears exactly once in $o$. Then we get: 
$$\vert \disp{p} \vert = \sum\limits_{i=1}^n (o_i - p_i) = \sum\limits_{i=1}^n o_i - \sum\limits_{i=1}^n p_i = \frac{n(n+1)}{2} - \sum\limits_{i=1}^n p_i, $$
and the right-hand side is clearly invariant under permutation.
\end{proof}

\section{Specialisations}\label{sec:spec}

There are a number of natural restrictions that we can place on parking functions. For example, if we restrict each car to have displacement at most one, we get so-called \emph{unit-interval} parking functions (see Section~\ref{subsec:UPF} for a short discussion on what is known about these). Conversely, \luk\ paths can also be restricted, for example in terms of their height, or the largest step size allowed. In this section, we study several such restrictions under the bijections $\PFtoLuk$ and $\LuktoPF$.

\subsection{The Motzkin family}\label{subsec:Motz}

If we restrict steps in a \luk\ path to have size at most $1$, we get the well-known \emph{Motzkin paths} (see e.g.~\cite{Motz}). We denote by $\Motz{n}$ the set of Motzkin paths with $n$ steps. These are counted by the Motzkin numbers (Sequence~A001006 in the OEIS~\cite{OEIS}). The corresponding parking functions $p = (p_1, \ldots, p_n) \in [n]^n$ are those that satisfy the restriction
\beq\label{eq:MotzPF}
\forall i \in [n], \ \vert \{ j \in [n];\, p_j = i \} \vert \leq 2.
\eeq
In other words, every spot in the car park is preferred by at most $2$ cars. These were studied in previous work by the authors~\cite[Section~3]{SZ_MVP} under the name of \emph{Motzkin parking functions}. In particular, they provided a bijection between non-crossing matchings and parking functions whose \emph{MVP outcome} reverses the order of the cars. Here, the MVP outcome of a parking function is the order in which cars end up parking if they follow the MVP (Most Valuable Player) parking process defined by Harris \emph{et al.}~\cite{HarrisMVP}. In this process, when a car finds its preferred
spot occupied by a previous car, it “bumps” that car out of the spot and parks there. The earlier car then has to drive on, and parks in the first available spot it can find.

\subsection{Prime parking functions}\label{subsec:primePF}

Given a parking function $p = (p_1, \ldots, p_n) \in \PF{n}$, and an index $j \in [n]$, we say that $j$ is a \emph{breakpoint} for $p$ if $\vert \{ i \in [n]; \, p_i \leq j \} \vert = j$, i.e.\ exactly $j$ cars prefer the first $j$ spots. A parking function is said to be \emph{prime} if its only breakpoint is at index $n$. The concept of prime parking functions was introduced by Gessel, who showed that there are $(n-1)^{(n-1)}$ prime parking functions (see e.g.~\cite[Exercise~5.49]{StanEC}). A bijective proof of this formula was later given in~\cite{DO_primePF}. We denote by $\PPF{n}$, respectively $\incPPF{n}$, the set of prime parking functions, respectively non-decreasing prime parking functions, of length $n$. In general, breakpoints of parking functions are easily read from the corresponding \luk\ path.

\begin{proposition}\label{pro:breakpoint_PF_luk}
Let $p \in \PF{n}$ be a parking function and $j \in [n]$ an index. Then $j$ is a breakpoint for $p$ if and only if the \luk\ path $\ell := \PFtoLuk(p)$ hits the $x$-axis after $j$ steps, i.e.\ $h(\ell; j) = 0$.
\end{proposition}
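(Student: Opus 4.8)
The plan is to reduce this statement directly to Lemma~\ref{lem:height_excess_cars}, which already provides the key bridge between the ``excess cars'' count and the height of the \luk\ path. The only subtlety is that Lemma~\ref{lem:height_excess_cars} as stated applies to \emph{non-decreasing} parking functions, while Proposition~\ref{pro:breakpoint_PF_luk} is stated for arbitrary $p \in \PF{n}$. This is easily dealt with: both the breakpoint condition and the image $\PFtoLuk(p)$ depend only on the multiset $\{p_1, \ldots, p_n\}$, hence are invariant under permutation of the entries of $p$. So I would first observe that it suffices to prove the claim for non-decreasing $p$, by replacing $p$ with its non-decreasing rearrangement $p^{\mathrm{inc}}$.

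Next, for $p \in \incPF{n}$ and $\ell := \PFtoLuk(p)$, Lemma~\ref{lem:height_excess_cars} gives
\[
h(\ell; j) = \left\vert \{i \in [n]; \, p_i \leq j \} \right\vert - j.
\]
By definition, $j$ is a breakpoint for $p$ exactly when $\vert \{ i \in [n]; \, p_i \leq j \} \vert = j$, i.e.\ exactly when the right-hand side above equals $0$, i.e.\ exactly when $h(\ell; j) = 0$. Since $\ell$ is a \luk\ path (Proposition~\ref{pro:pf_to_luk_well-def}), it never goes below the $x$-axis, so $h(\ell; j) = 0$ is precisely the condition that $\ell$ touches the $x$-axis after $j$ steps. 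This completes the proof.

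I do not anticipate a genuine obstacle here: the proposition is essentially a restatement of Lemma~\ref{lem:height_excess_cars} once one notes the permutation-invariance. The only thing to be careful about is making the permutation-invariance argument cleanly — specifically, that $\PFtoLuk(p) = \PFtoLuk(p^{\mathrm{inc}})$ follows immediately from Equation~\eqref{eq:PF_extendluk_path}, since $\ell_i$ depends only on the number of indices $j$ with $p_j = i$, which is unchanged by reordering; and similarly the breakpoint condition at index $j$ depends only on $\vert \{ i; p_i \leq j \} \vert$, again unchanged by reordering. (Alternatively, one could avoid mentioning rearrangements altogether and simply note that the proof of Lemma~\ref{lem:height_excess_cars} — ``Equation~\eqref{eq:height_excess_cars} follows through summation'' from the definition of $\ell_i$ — never actually used that $p$ is non-decreasing, so Equation~\eqref{eq:height_excess_cars} holds verbatim for all $p \in \PF{n}$, and the result is then immediate. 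Either framing works; I would pick whichever reads more smoothly alongside the surrounding text.)
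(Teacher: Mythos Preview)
Your proposal is correct and essentially matches the paper's approach. The paper takes precisely your second alternative: it re-derives Equation~\eqref{eq:height_excess_cars} directly by summing $\ell_i = \vert\{k;\,p_k=i\}\vert - 1$ over $i \leq j$ (which, as you observed, does not require $p$ to be non-decreasing), and then the result is immediate.
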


\begin{proof}
By construction, if $\ell = (\ell_1, \ldots, \ell_n)$, then the height of the path after $j$ steps is simply 
$$h(\ell; j) = \sum\limits_{i=1}^j \ell_i = \sum\limits_{i=1}^j \left( \vert \{ k \in [n]; \, p_k = i \} \vert - 1 \right) = \vert \{ i \in [n]; \, p_i \leq j \} \vert - j,$$
where we applied the definition of $\PFtoLuk$ from Equation~\eqref{eq:PF_extendluk_path}. The result immediately follows.
\end{proof}

We say that a \luk\ path $\ell \in \Luk{n}$ is \emph{prime} if it stays strictly above the $x$-axis other than at its start and end points $(0,0)$ and $(n,0)$, and denote by $\PLuk{n}$ the set of prime \luk\ paths. We now state our first specialisation.

\begin{theorem}\label{thm:bij_primePF_luk}
The map $\PFtoLuk : \incPPF{n} \rightarrow \PLuk{n} $ is a bijection. Moreover, we have $\vert \incPPF{n} \vert = \vert \PLuk{n} \vert = \Cat{n-1}$.
\end{theorem}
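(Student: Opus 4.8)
The plan is to deduce the theorem directly from the bijection $\PFtoLuk : \incPF{n} \to \Luk{n}$ of Theorem~\ref{thm:bij_pf_luk}, together with the breakpoint characterisation in Proposition~\ref{pro:breakpoint_PF_luk}. First I would show that $\PFtoLuk$ restricts to a bijection between $\incPPF{n}$ and $\PLuk{n}$. Since $\PFtoLuk$ is already a bijection on all of $\incPF{n}$, it suffices to check that it identifies the two sub-families, i.e.\ that for $p \in \incPF{n}$ the path $\ell := \PFtoLuk(p)$ is prime if and only if $p$ is a prime parking function. By Proposition~\ref{pro:breakpoint_PF_luk}, $j$ is a breakpoint of $p$ precisely when $h(\ell; j) = 0$; note also that $j = n$ is always a breakpoint (all $n$ cars prefer the first $n$ spots) and correspondingly $\ell$ always returns to the $x$-axis at step $n$. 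Thus ``$n$ is the only breakpoint of $p$'' translates exactly to ``$h(\ell; j) > 0$ for all $j \in \{1, \ldots, n-1\}$'', which by definition says $\ell$ stays strictly above the $x$-axis except at its endpoints, i.e.\ $\ell \in \PLuk{n}$. This gives the bijection, and the inverse is of course the restriction of $\LuktoPF$.

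It then remains to compute the common cardinality. The plan here is to set up a bijection between prime \luk\ paths $\PLuk{n}$ and arbitrary \luk\ paths $\Luk{n-1}$, which by the identity $|\Luk{n-1}| = \Cat{n-1}$ (stated in Section~\ref{subsec:luk}) finishes the proof. A prime path $\ell = (\ell_1, \ldots, \ell_n) \in \PLuk{n}$ has $\ell_1 \geq 1$ (the first step must leave the $x$-axis, so it cannot be $s^{-1}$ or $s^0$), stays strictly positive through step $n-1$, and returns to $0$ at step $n$, forcing $\ell_n = -1$ (since $h(\ell; n-1) > 0$ and the only step that decreases height is $s^{-1}$, and it can only decrease by one, we get $h(\ell; n-1) = 1$ and $\ell_n = -1$). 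The natural map is to ``lower the first step and delete the last'': send $\ell$ to $\ell' := (\ell_1 - 1, \ell_2, \ldots, \ell_{n-1})$. One checks $\ell'$ is a \luk\ word of length $n-1$: the partial sums of $\ell'$ are those of $\ell$ shifted down by one, and since $h(\ell; j) \geq 1$ for $1 \leq j \leq n-1$ these remain $\geq 0$, while the total sum is $h(\ell; n-1) - 1 = 0$. Conversely, given $\ell' = (\ell'_1, \ldots, \ell'_{n-1}) \in \Luk{n-1}$, the inverse sends it to $(\ell'_1 + 1, \ell'_2, \ldots, \ell'_{n-1}, -1)$; one verifies this lands in $\PLuk{n}$ because raising the first step makes every partial sum through index $n-1$ strictly positive, and appending $-1$ brings the final height (which is $1$) back to $0$ while keeping it nonnegative throughout. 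These two maps are visibly mutually inverse, giving $|\PLuk{n}| = |\Luk{n-1}| = \Cat{n-1}$.

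I do not expect any genuine obstacle here: every ingredient is already in place (the full bijection, the breakpoint lemma, and the Catalan count for \luk\ paths), and the only real content is the observation that ``prime'' on the parking-function side and ``prime'' on the path side are the same condition under $\PFtoLuk$, plus the elementary shift-and-truncate bijection $\PLuk{n} \leftrightarrow \Luk{n-1}$. The one point requiring a little care — and the closest thing to a subtlety — is the boundary bookkeeping in the counting bijection: correctly arguing that a prime path of length $n$ forces $\ell_n = -1$ and $h(\ell; n-1) = 1$, and that the reverse map genuinely produces a \emph{prime} path (strict positivity on $1, \ldots, n-1$) rather than just some \luk\ path. Alternatively, if one prefers to avoid constructing this bijection by hand, the equality $|\PLuk{n}| = \Cat{n-1}$ also follows from the standard ``first-return'' decomposition of \luk\ paths, or from the fact that prime \luk\ paths correspond under the tree encoding to plane trees whose root has a single child, which are in bijection with all plane trees on $n$ nodes; I would mention this as a remark but carry out the direct argument above as the main proof.
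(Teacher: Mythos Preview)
Your proposal is correct and matches the paper's approach for the bijection part (both deduce it immediately from Proposition~\ref{pro:breakpoint_PF_luk} together with Theorem~\ref{thm:bij_pf_luk}). For the enumeration, you carry out the counting bijection on the path side, sending $(\ell_1,\ldots,\ell_n)\in\PLuk{n}$ to $(\ell_1-1,\ell_2,\ldots,\ell_{n-1})\in\Luk{n-1}$; the paper's proof instead does the equivalent bijection on the parking-function side, sending $(p_1,\ldots,p_n)\in\incPPF{n}$ to $(p_2,\ldots,p_n)\in\incPF{n-1}$ after observing that primeness for a non-decreasing $p$ amounts to $p_1=1$ and $p_i<i$ for $i\geq 2$. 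These are the same map transported through $\PFtoLuk$, and indeed the paper records your path-side version explicitly in Remark~\ref{rem:primePF_primeLuk_count}, so the two arguments are interchangeable.
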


\begin{proof}
That $\PFtoLuk$ induces a bijection from non-decreasing prime parking functions to prime \luk\ paths is an immediate consequence of Proposition~\ref{pro:breakpoint_PF_luk}. To get the enumeration, notice that a non-decreasing parking sequence $p = (p_1, \ldots, p_n)$ is a prime parking function if and only if we have $p_1= 1$ and $p_i < i$ for all $i \geq 2$ (applying Proposition~\ref{pro:pf_char}, Case~(2), and the definition of prime parking functions). This immediately implies that the map $(p_1, \ldots, p_n) \mapsto (p_2, \ldots, p_n)$ is a bijection from $\incPPF{n}$ to $\incPF{n-1}$, yielding the desired enumeration.
\end{proof}

\begin{remark}\label{rem:primePF_primeLuk_count}
We can also express the above bijection $\incPPF{n} \rightarrow \incPF{n-1}$ in terms of \luk\ paths. We get the bijection $\PLuk{n} \rightarrow \Luk{n-1}$, $(\ell_1, \ldots, \ell_n) \mapsto (\ell_1 - 1, \ell_2, \ldots, \ell_{n-1})$, from prime \luk\ paths of length $n$ to \luk\ paths of length $n-1$. In words, given a prime \luk\ path, we decrease the size of its first step by one, and delete the last step (this is necessarily a ``down'' step, since the path is prime), yielding a \luk\ path with one less step. This bijection is illustrated in Figure~\ref{fig:bij_luk_primeLuk} for the prime \luk\ path $\ell = (3, -1, -1, 1, 0, -1, 0, -1)$ (left), which maps to the \luk\ path $\ell' = (2, -1, -1, 1, 0, -1, 0)$ (right).
\end{remark}

\begin{figure}[ht]
 \centering
  \begin{tikzpicture}[scale=0.4]
   \draw [step=2] (0,0) grid (16,6);
	\tdot{0}{0}{red}
	\tdot{8}{6}{red}
	\tdot{8}{4}{red}
	\tdot{8}{2}{red}
	\tdot{12}{4}{red}
	\tdot{14}{4}{red}
	\tdot{14}{2}{red}
	\tdot{16}{2}{red}
	\tdot{16}{0}{red}    
    \draw [very thick, color=red]  (0,0)--(8,6)--(8,4)--(8,2)--(12,4)--(14,4)--(14,2)--(16,2)--(16,0);
    \node at (19,2) {{\LARGE$\longmapsto$}};
    \begin{scope}[shift={(20,-2)}]
   \draw [step=2] (2,2) grid (16,6);
	\tdot{2}{2}{red}
	\tdot{8}{6}{red}
	\tdot{8}{4}{red}
	\tdot{8}{2}{red}
	\tdot{12}{4}{red}
	\tdot{14}{4}{red}
	\tdot{14}{2}{red}
	\tdot{16}{2}{red}
    \draw [very thick, color=red]  (2,2)--(8,6)--(8,4)--(8,2)--(12,4)--(14,4)--(14,2)--(16,2);
    \end{scope}
  \end{tikzpicture}
  \caption{Illustrating the bijection from $\PLuk{n}$ to $\Luk{n-1}$. \label{fig:bij_luk_primeLuk}}
\end{figure}

\subsection{Unit-interval parking functions}\label{subsec:UPF}

\begin{definition}\label{def:UPF}
A parking function $p$ is said to be \emph{unit-interval} if it satisfies $\max \left( \disp{p} \right) \leq 1$.
\end{definition}

In other words, a unit-interval parking function is one where each car has displacement zero or one. 
Unit-interval parking functions were originally defined by Hadaway~\cite{Hadaway}, who showed that they are enumerated by the Fubini numbers. The theory of unit-interval parking functions was further developed in~\cite{HarrisUPF1, HarrisUPF2, HarrisUPF3, HarrisUPF4}, with rich combinatorial connections to the permutohedron and a generalisation of the aforementioned Fubini numbers. 
We denote by $\UPF{n}$, respectively $\incUPF{n}$, the set of unit-interval parking functions, respectively unit-interval non-decreasing parking functions, of length $n$. We first state a straightforward characterisation of unit-interval parking functions.

\begin{proposition}\label{pro:UPF_charac}
Let $p = (p_1, \ldots, p_n) \in \PF{n}$ be a parking function with outcome $\OO{p} = (o_1, \ldots, o_n)$. Then $p \in \UPF{n}$ if and only if we have $o_i \in \{p_i, p_i + 1 \}$ for all $i \in [n]$. That is, each car parks either in its preferred spot, or in the spot immediately after.
\end{proposition}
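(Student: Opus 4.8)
The statement is essentially an unpacking of Definition~\ref{def:UPF} together with the basic monotonicity of the parking process, so the plan is short. The key preliminary observation is that in the parking process, no car ever parks \emph{before} its preferred spot: when car $i$ enters, it either parks in spot $p_i$ (if available) or drives forward to the first unoccupied spot $k > p_i$. In either case $o_i \geq p_i$, so the displacement $d_i = o_i - p_i$ is a non-negative integer for every $i \in [n]$. This is worth stating explicitly since the whole equivalence rests on it.

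Given this, I would argue as follows. By definition, $p \in \UPF{n}$ means $\max\left(\disp{p}\right) \leq 1$, i.e.\ $d_i \leq 1$ for all $i \in [n]$. Combined with $d_i \geq 0$ from the observation above, this holds if and only if $d_i \in \{0, 1\}$ for every $i \in [n]$. Substituting $d_i = o_i - p_i$, this is equivalent to $o_i \in \{p_i, p_i + 1\}$ for all $i \in [n]$, which is exactly the claimed condition. The phrasing ``each car parks either in its preferred spot, or in the spot immediately after'' corresponds precisely to the two cases $d_i = 0$ and $d_i = 1$.

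There is no real obstacle here: the only thing that needs care is not to treat $o_i \geq p_i$ as self-evident but to point to the definition of the parking process, since the entire content of the proposition is the interplay between the non-negativity of displacement (forced by the parking rule) and the upper bound of $1$ (the defining condition of a unit-interval parking function). The argument is a two-line chain of equivalences once that observation is in place.
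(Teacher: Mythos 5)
Your argument is correct, and it is exactly the intended one: the paper states this proposition without proof as a ``straightforward characterisation,'' and your two-line chain of equivalences (non-negativity of displacement from the parking rule, plus the defining bound $\max\left(\disp{p}\right)\leq 1$) is the argument the authors leave implicit. Your explicit remark that $o_i \geq p_i$ needs to be traced back to the definition of the parking process is a sensible addition.
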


Now consider a unit-interval parking function $p \in \UPF{n}$, and its corresponding \luk\ path $\ell := \PFtoLuk(p)$. By Theorem~\ref{thm:bij_pf_luk} and Definition~\ref{def:UPF}, we have $\height{\ell} = \max \left( \disp{p} \right) \leq 1$. In particular, all steps in $\ell$ have size at most one, making $\ell$ a Motzkin path, or equivalently $p$ is a Motzkin parking function in the sense of Section~\ref{subsec:Motz}. This can also be seen directly from the parking function $p$. Indeed, in any parking function, if three cars prefer the same spot, then the last of these cars to arrive must have displacement at least two. We then get the following specialisation. To simplify notation, we write $\Motz{n}^{ \leq 1}$ for the set of Motzkin paths of length $n$ and height at most one, and refer to these as $1$-Motzkin paths.

\begin{theorem}\label{thm:bij_UPF_Motz}
The map $\PFtoLuk: \incUPF{n} \rightarrow \Motz{n}^{ \leq 1 }$ is a bijection. Moreover, we have $\vert \incUPF{n} \vert = \vert  \Motz{n}^{ \leq 1 } \vert = 2^{n-1}$.
\end{theorem}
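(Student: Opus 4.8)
The plan is to leverage Theorem~\ref{thm:bij_pf_luk} directly, so that the only real work is identifying the target set and counting it. First I would observe that by Theorem~\ref{thm:bij_pf_luk}, $\PFtoLuk$ restricts to a bijection between $\incPF{n}$ and $\Luk{n}$ under which $\max(\disp{p}) = \height{\PFtoLuk(p)}$. By Definition~\ref{def:UPF}, a non-decreasing parking function $p$ is unit-interval precisely when $\max(\disp{p}) \leq 1$, which under the bijection translates to $\height{\PFtoLuk(p)} \leq 1$. A \luk\ path of height at most one can only use steps $s^{-1} = (0,-1)$ and $s^{0} = (1,0)$ (a step $s^k$ with $k \geq 1$ would force the path to a height of at least one \emph{and} then, since the path must return to the $x$-axis, this is actually already excluded because such a step climbs; more carefully, any step of size $k \geq 1$ increases the height by $k \geq 1$, and since the path starts at height $0$, after that step it sits at height $\geq 1$ with a subsequent step still to come if we are not at the end — but even at the end, $s^k$ for $k\geq 1$ starting at height $0$ ends at height $k > 0$, contradicting that the path ends at $(n,0)$; so in fact only sizes $-1$ and $0$ can appear at all, and the height-$\leq 1$ condition is automatic once we also imphibit size-$\geq 2$ steps... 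I should phrase this cleanly). The clean statement: a \luk\ path has height $\leq 1$ iff all its steps have size in $\{-1,0\}$, i.e.\ it is a Motzkin path of height at most $1$, i.e.\ a $1$-Motzkin path. This gives the bijection $\PFtoLuk: \incUPF{n} \to \Motz{n}^{\leq 1}$.

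Second, I would count $\Motz{n}^{\leq 1}$. A $1$-Motzkin path of length $n$ is a sequence $(\ell_1, \ldots, \ell_n)$ with each $\ell_i \in \{-1, 0\}$, all partial sums $\geq 0$, and total sum $0$. If there are $m$ steps equal to $-1$, there must be $m$ steps equal to $+1$ to balance — but wait, there are no $+1$ steps here; the ``up'' move in a height-$\leq 1$ setting does not exist among \luk\ steps. Let me reconsider: among steps of size in $\{-1, 0\}$, the partial sums can never increase, so starting at $0$ they can never be positive; hence the only way all partial sums are $\geq 0$ and the total is $0$ is if \emph{every} step is $s^0$. That would give $|\Motz{n}^{\leq 1}| = 1$, which is wrong. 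So the resolution is that $1$-Motzkin paths in this paper's step convention are \emph{not} the same as what I described — I must recall that in the standard Motzkin path literature, the steps are $U=(1,1)$, $D=(1,-1)$, $F=(1,0)$, and restricting size to $\leq 1$ in \emph{this paper's} \luk\ convention $s^k=(k+1,k)$ means steps $s^{-1}=(0,-1)$, $s^0=(1,0)$, $s^1=(2,1)$. So a Motzkin path of length $n$ uses $s^{-1}, s^0, s^1$; the number of $s^1$ steps equals the number of $s^{-1}$ steps; and the height-$\leq 1$ restriction means we never have two consecutive unmatched up-steps. The cleanest combinatorial model: a $1$-Motzkin path of length $n$ corresponds to a sequence over $\{s^{-1}, s^0, s^1\}$ where the $s^1$ and $s^{-1}$ steps pair up into ``bumps'' $s^1 s^0 \cdots s^0 s^{-1}$ at height $1$ — actually simpler, since height $\leq 1$, every $s^1$ is immediately closed: between an $s^1$ and its matching $s^{-1}$ only $s^0$ steps can appear, but those $s^0$ steps are then at height $1$. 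Hmm, I realize the truly clean approach is the standard fact that $1$-Motzkin paths of length $n$ (height $\leq 1$) are counted by $2^{n-1}$: I would prove this by a direct bijection or a recurrence, e.g.\ showing $a_n = 2a_{n-1}$ for $n \geq 2$ with $a_1 = 1$.

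Third, to establish $|\Motz{n}^{\leq 1}| = 2^{n-1}$ I would set up the recurrence. Let $a_n := |\Motz{n}^{\leq 1}|$. A height-$\leq 1$ Motzkin path of length $n \geq 2$ either starts with $s^0$ (followed by any element of $\Motz{n-1}^{\leq 1}$), or starts with $s^1$; in the latter case, since the path has height $\leq 1$ and must eventually come down, and cannot go up again before coming down, it has the form $s^1 (s^0)^{k} s^{-1} \rho$ where $\rho \in \Motz{n-k-2}^{\leq 1}$ for some $k \geq 0$. This gives $a_n = a_{n-1} + \sum_{k=0}^{n-2} a_{n-k-2} = a_{n-1} + \sum_{j=0}^{n-2} a_j$ with $a_0 := 1$. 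Combined with the same identity for $n-1$, telescoping yields $a_n = 2a_{n-1}$, and with $a_1 = 1$ we conclude $a_n = 2^{n-1}$. Alternatively — and this is perhaps the slickest route, which I would actually prefer to present — I would give a direct bijection between $\Motz{n}^{\leq 1}$ and binary strings of length $n-1$: reading the path left to right, record for positions $2$ through $n$ whether a ``new feature'' (an opening bump vs.\ a flat/closing) occurs, using the height-$\leq 1$ constraint to make this reversible. I expect the main obstacle to be nailing down this last bijection or recurrence cleanly and making sure the height-$\leq 1$ versus general-Motzkin distinction is handled without slips; everything else is a routine transfer through Theorem~\ref{thm:bij_pf_luk}. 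Once the count $2^{n-1}$ is in hand, the equality $|\incUPF{n}| = |\Motz{n}^{\leq 1}|$ is immediate from the bijection, completing the proof.
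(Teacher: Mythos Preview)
Your bijection argument is the same as the paper's: both invoke Theorem~\ref{thm:bij_pf_luk} so that $\max(\disp{p}) \leq 1$ translates to $\height{\ell} \leq 1$, and then observe that a height-$\leq 1$ \luk\ path cannot use any step $s^k$ with $k \geq 2$, hence lies in $\Motz{n}^{\leq 1}$. However, your first paragraph contains a false ``clean statement'': it is \emph{not} true that a \luk\ path has height $\leq 1$ iff all steps have size in $\{-1,0\}$ --- the path $(s^1, s^{-1})$ is a counterexample. You catch and repair this in your second paragraph, but the first paragraph as written should be rewritten, not merely patched downstream.

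For the enumeration you take a genuinely different route. The paper counts on the parking-function side: by Fact~\ref{fact:inc_pf_inc_outcome} and Proposition~\ref{pro:UPF_charac}, a non-decreasing $p$ is unit-interval iff $p_1 = 1$ and $p_i \in \{i-1, i\}$ for $i \geq 2$, which gives $2^{n-1}$ in one line. You instead count $\Motz{n}^{\leq 1}$ via the first-return decomposition and the recurrence $a_n = a_{n-1} + \sum_{j=0}^{n-2} a_j$, then telescope to $a_n = 2a_{n-1}$. Your recurrence is correct and the telescoping works, but it is noticeably more labour than the paper's direct count. The paper does also remark (just after its proof) on the Motzkin-side bijection you allude to at the end --- a $1$-Motzkin path is determined by the subset $\{j \in [n-1] : h(m;j) = 1\}$ --- which is a cleaner way to get $2^{n-1}$ on that side than your recurrence, and is exactly the ``direct bijection to binary strings'' you say you would prefer but do not carry out.
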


\begin{proof}
That the map is a bijection follows from the preceding remarks and Theorem~\ref{thm:bij_pf_luk}. For the enumeration, note that if $p \in \incPF{n}$, then $p$ is unit-interval if and only if we have $p_1 = 1$, and $p_i \in \{i-1, i\}$ for all $i \geq 2$ (applying Proposition~\ref{pro:UPF_charac}). There are therefore two choices for each car $2$ to $n$, yielding $2^{n-1}$ choices in total.
\end{proof}

The enumeration can also be seen on the Motzkin paths. Indeed, a $1$-Motzkin path $m$ is uniquely characterised by the subset $\{ j \in [n-1]; \, h(m; j) = 1\}$, giving a bijection between $\Motz{n}^{\leq 1}$ and subsets of $[n-1]$.

\begin{remark}\label{rem:UPF_perm_inv}
Unlike prime parking functions, the set of unit-interval parking functions is not \emph{permutation invariant}. That is, there exists a unit-interval parking function $p$ such that permuting the preferences of $p$ no longer yields a unit-interval parking function. In the notation of this paper, this means that there exists a parking function $p \in \PF{n} \setminus \UPF{n}$ such that $\PFtoLuk(p)$ is a $1$-Motzkin path. For example, if we take $p = (1, 2, 1) \in \PF{3}$, then car $3$ has displacement $2$, so $p$ is not unit-interval, but $\PFtoLuk(p)$ is the \luk\ path with steps $(1, 0, -1)$, which is $1$-Motzkin. In particular, this means that there are labellings of a $1$-Motzkin path, in the sense of Remark~\ref{rem:label_luk}, which do not yield unit-interval parking functions.
\end{remark}

\section*{Acknowledgment}

The research leading to these results is partially funded by the National Natural Science Foundation of China (NSFC), grant number 12101505, by the Research Development Fund of Xi'an Jiaotong-Liverpool University, grant number RDF-22-01-089, and by the Postgraduate Research Scholarship of Xi'an Jiaotong-Liverpool University, grant number PGRS2012026.


\footnotesize

\end{document}